\title{\bf{Lipschitz approximation for general almost area minimizing currents}}
\author{\Large{Reinaldo Resende}}
\affil{\normalsize \textit {Carnegie Mellon University}}
\begin{document}

\date{}

\maketitle

\begin{abstract}
We prove a Lipschitz approximation with superlinear error terms for integral currents $\omega$-minimizing the area functional, where $\omega$ is a modulus of continuity {satisfying a Dini condition}. We also present an almost monotonicity result for the density of these general almost area minimzing integral currents.
\end{abstract}


\section{Introduction}

The regularity theory is a widely spread theme in mathematics. It splits into various branches, for example, the regularity theory of PDEs, the regularity theory of minimal surfaces, etc. One of the first and most famous problems is the existence and regularity of minimal surfaces, in other words, the existence and regularity of objects that minimize the area functional over some class of admissible competing surfaces. The classes of admissible objects where the area functional may be defined can be genuinely different, giving rise to quite different approaches to geometric variational problems. For example, the classical one considers the functional area defined in the class of smooth submanifolds of a fixed ambient Riemannian manifold. 

We quickly realize that in this smooth context, the minimization problem for the area functional manifests a lack of compactness that naturally leads to considering and introducing objects that play the role of generalized smooth surfaces in the same guise of what is done when weak solutions are introduced to work around the weaknesses of the space of classical solutions of PDEs. Over the years, many generalizations have been proposed, for instance, the theory of Caccioppoli sets, varifolds, currents, flat chains, etc. 


In this note, we will focus on the theory of integral currents that minimize (in a relaxed sense) the area functional. According to the definition given by \textsc{I. Tamanini}, \cite[Eq. 1.2]{tamanini1984regularity}, 
we work with the relaxed minimality condition, which we call the \(\omega\)-almost minimality condition (see Definition \ref{D}). This condition is natural since it arises from practical problems as \textsc{F. Maggi} noticed in his book \cite[Chapter III]{maggi2012sets}, where for the codimension \(1\) setting, he considers an almost minimality condition that is a particular case of the {\(\Omega\)}-almost minimality. 


A natural question to ask is: do these generalized surfaces have good regularity properties provided they minimize area?. Aiming at answering this question, in arbitrary dimension and codimension, in the setting of integral currents, \textsc{F. Almgren Jr.} has introduced his long and intricate, but still rich and beautiful, program in \cite{Alm} to prove regularity results for interior points of area minimizing integral currents. He stated that the singular set has Hausdorff dimension at most \(m-2\). His theory was revisited by \textsc{C. De Lellis and E. Spadaro},  in a series of works (see \cite{DS3}) where they furnished a different approach using new techniques of geometric analysis which give a much shorter proof and they also strengthened the main result. More recently, in \cite{skorobogatova2021upper}, \textsc{A. Skorobogatova} improved \textsc{Almgren's} estimate, she proves that the upper Minkowski dimension of the interior singular set is at most \(m-2\). 


In this article, we aim to perform the first part of the regularity program used in the aforementioned works, which are (almost-) monotonicity results, the strong Lipschitz approximation, and the strong excess decay at interior points. We aim at proving these first part of the framework for \(\omega\)-almost area minimizing integral currents {where $\omega$ is a modulus of continuity satisfying a Dini condition}. 
{Moreover, we give an example (Example \ref{example}) of a current that satisfies the \(\omega\)-almost minimality condition and it is not covered by the \(\AM\)-almost minimality condition in \cite{DSS2} and \cite{maggi2012sets}.}

\section{Preliminaries}

The goal of this section is to set standard notations on currents theory that will be used throughout this paper.


We use \(\ball{p}{r}\subset\R^{m+n}\) for the open balls centered at \(p\in\R^{m+n}\) and of radius \(r\in]0,+\infty[\) of the ambient space \(\R^{m+n}\), and we fix \(\pi_0 := \R^m\times\{0\}\subset\R^{m+n}\). For any linear subspace \(\pi \subset \R^{m+n}, \pi^{\perp}\) is its orthogonal complement in \(\R^{m+n}\), \(\bp_\pi\) is the orthogonal projection onto \(\pi,\) and \(\bp := \bp_{\pi_0}\). 

We define the \textbf{tilted disk} \(\tball{p}{r}{\pi} := \ball{p}{r}\cap(p+\pi)\) and \(\cyltilted{p}{r}{\pi}\) the \textbf{tilted cylinder} as the set \(\left\{(x+y): x \in \tball{p}{r}{\pi}, y \in \pi^{\perp}\right\}\). We also set \(\cyl{p}{r}:= \cyltilted{p}{r}{\pi_0}\) and \(\baseball{p}{r}:= \tball{p}{r}{\pi_0}\). Moreover, \(\ocyl{r}:=\cyl{0}{r}=\cyltilted{0}{r}{\pi_0}\).

We also assume that each linear subspace \(\pi\) of \(\R^{m+n}\) is oriented by a \(k\)-vector \(\vec{\pi}:=v_1 \wedge \cdots \wedge v_k\), where \((v_i)_{i\in\{1,...,k\}}\) is an orthonormal base of \(\pi\) and, with an abuse of notation, we write \(\left|\pi_2-\pi_1\right|\) standing for \(\left|\vec{\pi}_2-\vec{\pi}_1\right|\), where \(|\cdot|\) is the norm associated to the canonical inner product of \(k\)-vectors.

For any $s\in[0,+\infty[$ we also set \(\cH^s\) as the \(s\)-dimensional Hausdorff measure in \(\R^{m+n}\). We recall the definition of density of a given $T\in\mathcal{D}_m(U)$, where $U\subseteq\R^{m+n}$ is an open set and $\mathcal{D}_m(U)$ is the set of $m$-dimensional current in $U$ at a given point $p\in\R^{m+n}$. We say that $\Theta^m(T,p)\in[0,+\infty]$ is the \textbf{{$m$-dimensional density of T at $p$}}, if
\begin{equation*} \Theta^m(T,p) = \lim_{r\to 0}\frac{\|T\|(\ball{p}{r})}{{\omega_m r^m}},\end{equation*}
whenever the limit exists, {where $\omega_m$ is the measure of the $m$-dimensional unit ball}.  

For standard notations and classical results on the theory of currents which will be used in this note, we refer the reader to the classical treatise of geometric measure theory \cite{Fed}. {In fact, given a current $T$, we denote by $\partial T$ the boundary of $T$, $\spt(T)$ its support, $\|T\|$ its associated total variation measure, and we use $\mathbf{I}_k(\R^{h})$ for the space of integral $k$-currents in $\R^{h}$ for $h\geq k$ both natural numbers}. For the theory of multi-valued maps, we refer the reader to \cite{DS2}. 

\begin{defi}[Excess and height]
Given an { \(m\)-dimensional integral }current \(T\) in \(\mathbb{R}^{m+n}\), i.e., \(T\in\mathbf{I}_m(\R^{m+n})\) and \(m\)-planes \(\pi,\) and \(\pi^{\prime}\), we define \textbf{the excesses of \(T\) in balls and cylinders} as
\begin{equation*}\bE\left(T, \ball{p}{r}, \pi\right):=\frac{1}{2\omega_m r^m} \int_{\ball{p}{r}}|\vec{T}-\vec{\pi}|^2 d\|T\|,\end{equation*}
\begin{equation*}\bE\left(T, \cyltilted{p}{r}{\pi}, \pi^{\prime}\right):=\frac{1}{2\omega_m r^m} \int_{\cyltilted{p}{r}{\pi}}\left|\vec{T}-\vec{\pi}^{\prime}\right|^2 d\|T\|,\end{equation*}
we will use the shorthand notation \(\bE\left(T, \cyltilted{p}{r}{\pi}\right)\) for \(\bE\left(T, \cyltilted{p}{r}{\pi}, \pi\right)\). We define also \textbf{the height function in a set \(A \subset \mathbb{R}^{m+m}\) with respect to the $m$-plane $\pi$} as
\begin{equation*}
\mathbf{h}(T, A, \pi):=\sup _{x, y \in \operatorname{spt}(T) \cap A}\left|\mathbf{p}_{\pi^{\perp}}(x)-\mathbf{p}_{\pi^{\perp}}(y)\right| .
\end{equation*}
\end{defi}

\begin{defi}[Optimal planes for the excess]
We say that an \(m\)-dimensional plane \(\pi\) \textbf{optimizes the excess of \(T\) in a ball \(\ball{p}{r}\)} if
\begin{equation}\label{D:excess}
    \bE\left(T, \ball{p}{r}\right):={\inf} _{\pi^{\prime}} \bE\left(T, \ball{p}{r},\pi^{\prime}\right)=\bE\left(T, \ball{p}{r}, \pi\right).
\end{equation} 
\end{defi}

Observe that in general the plane optimizing the excess is not unique and \(\mathbf{h}\left(T, \ball{p}{r}, \pi\right)\) might depend on the optimizer \(\pi\). 

\begin{defi}[Optimal planes]
A $m$-plane \(\pi\) is called an \textbf{optimal plane}, if it optimizes the height function among the planes that are optimal for the excess, i.e.,
\begin{equation*}\bh\left(T, \ball{p}{r}, \pi\right)={\inf} \left\{\bh\left(T, \ball{p}{r}, \pi^{\prime}\right): \pi^{\prime}\right. \text{ satisfies }\ \eqref{D:excess}\left.\right\}=: \mathbf{h}\left(T, \ball{p}{r}\right).\end{equation*}

Henceforth, \(\bh\left(T, \cyltilted{p}{r}{\pi}\right)\) will stand for \(\bh\left(T, \cyltilted{p}{r}{\pi}, \pi\right)\).
\end{defi} 

Lastly, we {give} the definition of \(\omega\)-almost minimality {as it is defined in \cite{tamanini1984regularity}. After that, we} briefly discuss the high level of generality that this condition represents.

\begin{defi}[Almost minimality condition]\label{D}
Let \(T\) be an \(m\)-dimensional integral current in \(\R^{m+n}\), i.e., \(T\in\mathbf{I}_m(\R^{m+n})\). We say that \(T\) is \textbf{\(\omega\)-almost area minimizing}, if, for each \(p\in\spt(T)\setminus\spt(\partial T)\), there exist \(s_{\omega}>0\) and an absolutely continuous nondecreasing function \(\omega: (0, {s_{\omega}}) \to (0,+\infty)\) such that \(\omega(s)= o(1)\) when $s\to0^+$, and 
\begin{equation*} \| T\|(\ball{p}{s}) \leq (1 + \omega(r))\| T+\partial S\|(\ball{p}{s}), \quad \forall s\in (0,s_{\omega}), \quad \forall S\in\mathbf{I}_{m+1}(\ball{p}{s}).
\end{equation*}

In the special case that \(\omega(s) = \bA s^{\balpha}\) for some \(\bA \geq 0\) and \(\balpha\in(0,1]\), we say that \(T\) is \textbf{\(\AM\)-almost area minimizing} where $\mathbf{s} = s_\omega$. If \(\omega \equiv 0\), we say that \(T\) is \textbf{area minimizing}.
\end{defi}

We now provide an example of an \(\omega\)-almost minimizer which does not belong to the class of \(\AM\)-almost minimizer.

\begin{ex}
We consider the function
\begin{equation*}
\begin{array}{crcl}
f : & (0,1)\subset\R & \to & (0,1)\subset\R \\
 & t & \mapsto & \int_{0}^t \left(\ln(\frac{e}{s})\right)^{-1}\diff s.
\end{array}
\end{equation*}

We have that \(f(0)=0\) and we set \(f(t) = f(-t), \forall t\in (-1,0)\). So, if we consider the Caccioppoli set given by \(E:=\operatorname{epi}(f)\) and \(Q_t:=(-t,t)^2\subset \R^2\), we have that 
\begin{equation*}
\begin{aligned}
\mathcal{P}(E,Q_t) - t = 2\int_0^t\left(\sqrt{1+(\ln(\frac{e}{s}))^{-2}}-1\right)ds \approx t (\ln(\frac{e}{t}))^{-2},
\end{aligned}
\end{equation*}

So, we have that the \(1\)-current induced by \(E\) is \(\omega\)-almost minimizer with \(\omega(t) = (\ln(\frac{e}{t}))^{-2}.\)
\end{ex}

\section{The \texorpdfstring{\(\omega\)}{Lg}-almost monotonicity formulas}

{Before stating the almost monotonicity of the mass ratio for \(\omega\)-almost area minimizers (Proposition \ref{P:almost-monotonicity-times}), we state the almost monotonicity result when we consider the definition of almost minimality given in Definition \ref{D2}. Furthermore, having the following almost moniticity of the almost ratio allows us to carry out the program in this article with the \(\Omega\)-almost area minimizers.
\begin{defi}[\(\Omega\)-almost minimality condition]\label{D2}
Let \(T\) be an \(m\)-dimensional integral current in \(\R^{m+n}\), i.e., \(T\in\mathbf{I}_m(\R^{m+n})\). We say that \(T\) is \textbf{\(\Omega\)-almost area minimizing}, if, for each \(p\in\spt(T)\setminus\spt(\partial T)\), there exist \(s_{\Omega}>0\) and an absolutely continuous nondecreasing function \(\Omega: (0, {s_{\Omega}}) \to (0,+\infty)\) such that \(\Omega(s)= o(1)\) when $s\to0^+$, and 
\begin{equation*} \| T\|(\ball{p}{s}) \leq \| T+\partial S\|(\ball{p}{s}) + \Omega(s)s^m, \quad \forall s\in (0,s_{\Omega}), \quad \forall S\in\mathbf{I}_{m+1}(\ball{p}{s}).\end{equation*}
\end{defi}
Note that, this almost monotonicity result comes} with an additive error term which is the analogous of \cite[Proposition 2.1]{DSS1} for {this $\Omega$-almost minimality condition}. Henceforth, we will denote by \((z-p)^{\perp}\) the projection of the vector \(z-p\) onto the orthogonal complement of the approximate tangent to \(T\) at \(z\).

Following \textsc{I. Tamanini}'s ideas, \cite{tamanini1984regularity}, we give an example to justify the different definitions of almost area minimality that we mentioned.

\begin{ex}\label{example}
We \emph{identify} \(T\) to the Caccioppoli set \(E\subset\R^{m+1}\) which is a minimizer of the following variational problem:
\begin{equation}\label{mean-curvature}
\inf\left \{\mathcal{P}(F, \ball{p}{s}) + \int_{F}H(q)\diff q : F \ \text{is a Caccioppoli set and} \ F\Delta E\subset\subset \ball{p}{s} \right \}, 
\end{equation}
where \(H\) is a prescribed mean curvature function that belongs to \(\mathrm{L}^{{\ell}}(\R^{m+1})\) with the crucial condition that \({{\ell}} > {m+1}\) and \(\mathcal{P}\) denotes the perimeter measure of Caccioppoli sets, see \cite{maggi2012sets}. Given any \(F\) as in \eqref{mean-curvature}, thanks to \({{\ell}}>{m+1}\), we must apply the H\"older inequality to derive
\begin{equation*}
\begin{aligned}
\mathcal{P}(E, \ball{p}{s}) - \mathcal{P}(F, \ball{p}{s}) &\leq \int_{E\Delta F}|H(x)|\diff x \\
&\leq \| H\|_{\mathrm{L}^{{\ell}}(\ball{p}{s})}{\cH^{m+1}}(\ball{p}{s})^{1-\frac{1}{{\ell}}} \\
&\leq \omega_{{m+1}}^{1-\frac{1}{{\ell}}}\| H\|_{\mathrm{L}^{{\ell}}(\ball{p}{s})} s^{{m+1}-\frac{{m+1}}{{{\ell}}}}.
\end{aligned}
\end{equation*}

{Therefore, by definition, one sees that \(T\) is an $\Omega$-almost area minimzers in $\mathbb{R}^{m+1}$ with}
\begin{equation*}
\Omega(s) = c_0(m)\omega_{{m+1}}^{1-\frac{1}{{{\ell}}}}\| H\|_{\mathrm{L}^{{\ell}}(\R^{m+1})}s^{1-\frac{m+1}{\ell}}\text{ and }s_\Omega=+\infty.
\end{equation*}
\end{ex}

\begin{prop}[\(\Omega\)-almost monotonicity formula]
Let \(T \in \mathbf{I}_{m}\left(\mathbb{R}^{m+n}\right)\) be an \(\Omega\)-almost minimizer and \(p\in \operatorname{spt}(T) \backslash \operatorname{spt}(\partial T)\). There are constants \(C_0, r_0 > 0\) such that
\begin{equation}\label{monotonicity-formula}
  \begin{aligned}
\int_{\ball{p}{r} \backslash \ball{p}{s}} \frac{\left|(z-p)^{\perp}\right|^{2}}{|z-p|^{m+2}} d\|T\|(z) \leq C_0\left(\frac{\|T\|\left(\ball{p}{r}\right)}{\omega_mr^m}-\frac{\|T\|\left(\ball{p}{s}\right)}{\omega_{m} s^{m}}+\int_s^r\frac{\Omega(\rho)}{\rho}\diff\rho\right),
\end{aligned} 
\end{equation}
for all \(0<s<r<r_0<{s_{\Omega}}\). Furthermore, whenever $\Omega$ satisfies a Dini condition, i.e., $\int_0^r \frac{\Omega(\rho)}{\rho}\diff\rho<+\infty$, the function $r \mapsto \frac{\|T\|\left(\ball{p}{r}\right)}{\omega_{m} r^{m}}+\int_0^r \frac{\Omega(\rho)}{\rho}\diff\rho$ is nondecreasing. 
\end{prop}
\begin{proof}
{Assume that $r_0$ small enough, so that $\partial T\res \ball{p}{r_0} = 0$ and take $r<r_0$. }We define the integral current
\begin{equation*}W := {p} \cone \partial\left(T\res\ball{p}{r}\right),\end{equation*}
{which is an $m$-dimensional integral current for a.e. $r>0${, see \cite[Chapter 6, Section 5]{simon2014introduction}}. We now test $W$ in} the \(\Omega\)-almost minimality of $T$ {(which is possible thanks to \cite[4.2.10]{Fed})} to obtain
\begin{equation}\label{min-for-W}
{\|T\|\left(\ball{p}{r}\right) \leq \|W\|\left(\ball{p}{r}\right) + \Omega(r)r^{m} {=} \frac{r}{m}\mathbf{M}\left(\partial (T\res\ball{p}{r})\right)+ \Omega(r)r^m,\text{ for a.e. $r>0$}},
\end{equation}
where $\mathbf{M}$ denotes the total mass of an integral current{, see \cite[Proposition 1.34]{tasnady2006uniqueness} for the classical equality.} 
We now set the mass function \(\fm(r):=\|T\|\left(\ball{p}{r}\right)\) and observe that \(\fm\) is a nondecreasing function and thus it is a function of bounded variation. We can decompose its distributional derivative \(D\fm\), which is a nonnegative measure, as \(D\fm=\fm^{\prime}\cH^1 + \mu_{s}\) and \(\mu_{s}\) is the singular part of \(D\fm\). In \eqref{min-for-W}, we multiply \(mr^{-m-1}\) to obtain
\begin{equation}\label{almost-min-proof}
-\frac{1}{r^{m}} \mathbf{M}\left(\partial (T\res\ball{p}{r})\right) \leq - \frac{{m}\fm(r)}{r^{m+1}} + m\frac{\Omega(r)}{r},\text{ for a.e. $r>0$},
\end{equation}
{Using the notation $p(r) = r^{m+1}$, note that, for a.e. $r>0$, one has that
\begin{equation}\label{derivate-frakm}
    \frac{-m\fm}{p}\cH^1 = D\biggl( \frac{\fm{(r)}}{r^m} \biggr) - \frac{D\fm(r)}{r^m}.
\end{equation}}
It is now easy to see, by integrating \eqref{almost-min-proof} on the interval \((s, t]\), take advantage of the right-continuity of $\rho \to \fm(\rho)/\rho^m$, where \(s_{\Omega} > r_0>t>s\), {and \eqref{derivate-frakm} that we reach
\begin{equation*}
\begin{aligned}
    -\int_{s}^{t} \frac{\mathbf{M}\left(\partial (T\res\ball{p}{\rho})\right)}{\rho^{m}} \diff\cH^1(\rho) &\leq \frac{\fm(t)}{t^{m}}-\frac{\fm(s)}{s^{m}} - \int_s^t \frac{d \mu_{s}(\rho)}{\rho^{m}} - \int_s^t \frac{\fm^{\prime}(\rho)}{\rho^{m}} \diff\cH^1(\rho) \\
    &\quad +m\int_s^t \rho^{-1}\Omega(\rho)\diff\rho.
\end{aligned}
\end{equation*}
This leads to
\begin{equation*}
\begin{aligned}
\underbrace{\int_s^t \frac{d \mu_{s}(\rho)}{\rho^{m}} }_{I^{s}} + \underbrace{\int_{s}^{t} \frac{\fm^{\prime}(\rho)-\mathbf{M}\left(\partial (T\res\ball{p}{\rho})\right)}{\rho^{m}} \diff\cH^1(\rho)}_{I^{a}} &\leq \frac{\fm(t)}{t^{m}}-\frac{\fm(s)}{s^{m}}+m\int_s^t \rho^{-1}\Omega(\rho)\diff\rho.
\end{aligned}
\end{equation*}}

In the proof of \cite[Proposition 2.1]{DSS1}, it is shown that \(I:=I^{s}+I^{a}\) bounds, up to a dimensional constant, the left-hand side of \eqref{monotonicity-formula} without the use of any minimality condition. So, it finishes the proof of our result.
\end{proof}

We now state a second almost monotonicity result for the \(\omega\)-almost minimality condition {(Definition \ref{D})} which now comes with multiplicative error terms, these result is the analogous for interior points of \cite[Proposition 2.3]{hirsch2019uniqueness} which is stated to boundary points and only for the special case \(\omega(r) = C r^{\alpha}\). 

\begin{prop}[\(\omega\)-almost monotonicity formula]\label{P:almost-monotonicity-times}
Let \(T\in\mathbf{I}_m(\R^{m+n})\) be an \(\omega\)-almost minimizer, \(p\in\spt(T)\setminus\spt(\partial T)\){, and assume that $D(\rho):= \int_0^{\rho}\frac{\omega(\rho)}{\rho}\mathrm{d}\rho$ is such that $D(s_\omega)<+\infty$,} then there exists a constant \(r_1>0\), such that
\begin{equation}\label{almost-monotonicity-times}
\int_{\ball{p}{t} \setminus \ball{p}{s}} e^{m{ D(|z-p|)}}\frac{\left|(z-p)^{\perp}\right|^{2}}{2|z-p|^{m+2}} d\|T\|(z) \leq e^{m{D(t)}} \frac{\|T\|\left(\ball{p}{t}\right)}{t^{m}}-e^{m{D(s)}} \frac{\|T\|\left(\ball{p}{s}\right)}{s^{m}},
\end{equation}
for every \(0<s<t<r_1<{s_{\omega}}\).
\end{prop}
\begin{proof} 
We start defining 
\begin{equation*}S: = T\res\ball{p}{r} \quad \text{ and } \quad W := {p} \cone \partial S.\end{equation*}

{Note that, by slicing theory, $W$ is an integral $m$-current for a.e. $r<r_1$ where $r_1$ is fixed such that $\partial T\res\ball{p}{r_1}=0$. }By the \(\omega\)-almost minimizing property of \(T\) {(which we can apply thanks to \cite[4.2.10]{Fed})}, we deduce for {a.e.} \(r<{r_1<s_{\omega}}\) that
\begin{equation}\label{E:almost-mon-mass-bound}
\begin{aligned}
\|T\|\left(\ball{p}{r}\right) \leq (1 + \omega(r)) \|W\|(\ball{p}{r}) {=} (1+\omega(r))\frac{r}{m} \|\partial S\|(\ball{p}{r}),
\end{aligned}
\end{equation}
see \cite[Chapter 6, Section 5]{simon2014introduction} { and \cite[Proposition 1.34]{tasnady2006uniqueness} for the equality.} For a.e. \(\rho\leq r_{1} <{s_{\omega}}\), 
we conclude that 
\begin{equation}\label{conta-longa}
\begin{aligned}
\frac{\diff}{\diff\rho}\left(\frac{\|T\|\left(\ball{p}{\rho}\right)}{\rho^m}\right) & =  \frac{-m\|T\|\left(\ball{p}{\rho}\right)}{\rho^{m+1}}+\frac{\|T\|^{\prime}\left({\ball{p}{\rho}}\right)}{\rho^m}\\
& = \frac{-m\|T\|\left(\ball{p}{\rho}\right)}{\rho^{m+1}}+\frac{\|T\|^{\prime}\left({\ball{p}{\rho}}\right)}{\rho^m} + \frac{m\|T\|\left(\ball{p}{\rho}\right)}{\left(1+\omega(\rho)\right)\rho^{m+1}} - \frac{m\|T\|\left(\ball{p}{\rho}\right)}{\left(1+\omega(\rho)\right)\rho^{m+1}}\\
&= \frac{m\|T\|(\ball{p}{\rho})}{\rho^{m+1}}\left(\frac{1}{1+\omega(\rho)} - 1 \right) + \underbrace{\frac{\|T\|^{\prime}\left({\ball{p}{\rho}}\right)}{\rho^m} - \frac{m\|T\|\left(\ball{p}{\rho}\right)}{\left(1+\omega(\rho)\right)\rho^{m+1}}}_{I_1},\\
\end{aligned}
\end{equation}
{where $\|T\|^{\prime}(\ball{p}{\rho})$ represents the distributional derivative of $f(t):=\|T\|(\ball{p}{\rho})$}. We only use \eqref{E:almost-mon-mass-bound} to bound \(I_1\) from below as follows
\begin{equation}\label{I_1}
I_1 \geq \frac{1}{\rho^m} \left( \|T\|^{\prime}(\ball{p}{\rho}) - \|\partial S\|(\ball{p}{\rho}) \right).
\end{equation}

Therefore, by \eqref{conta-longa} and \eqref{I_1}, we obtain that 
\begin{equation*}
\begin{aligned}
\frac{\diff}{\diff\rho}\left(\frac{\|T\|\left(\ball{p}{\rho}\right)}{\rho^m}\right) &\geq \frac{m\|T\|(\ball{p}{\rho}}{\rho^{m+1}}\left(\frac{1}{1+\omega(\rho)} - 1 \right) + \frac{1}{\rho^m} \left( \|T\|^{\prime}(\ball{p}{\rho}) - \|\partial S\|(\ball{p}{\rho}) \right) .
\end{aligned}   
\end{equation*} 

Since \(\omega\) is absolutely continuous, by the Lebesgue differentiation theorem, we have that \(\omega\) is differentiable almost everywhere on \((0,r_1)\). {If we denote by $D(\rho) = \int_0^\rho\frac{\omega(t)}{t}\mathrm{d}t$}, hence the latter {inequality} provides, for a.e. \(\rho\in (0,r_1)\),
{
\begin{equation}\label{differential-equation2}
\begin{aligned}
\frac{\diff}{\diff\rho}\left(e^{m D(\rho)}\frac{\|T\|\left(\ball{p}{\rho}\right)}{\rho^m}\right) &= m D^{\prime}(\rho)e^{m D(\rho)}\frac{\|T\|\left(\ball{p}{\rho}\right)}{\rho^m} + e^{m D(\rho)}\frac{\diff}{\diff\rho}\left(\frac{\|T\|\left(\ball{p}{\rho}\right)}{\rho^m}\right) \\
&\geq me^{m D(\rho)}\left(D^\prime(\rho) + \frac{1}{\rho} \left(\frac{1}{1+\omega(\rho)} - 1 \right)\right)\frac{\|T\|\left(\ball{p}{\rho}\right)}{\rho^m} \\
&\quad + e^{mD(\rho)}\frac{1}{\rho^m} \left( \|T\|^{\prime}(\ball{p}{\rho} - \|\partial S\|(\ball{p}{\rho}) \right) \\
&\geq me^{m D(\rho)}\left(D^\prime(\rho) - \frac{\omega(\rho)}{\rho(1+\omega(\rho))}\right)\frac{\|T\|\left(\ball{p}{\rho}\right)}{\rho^m} \\
&\quad + e^{mD(\rho)}\frac{1}{\rho^m} \left( \|T\|^{\prime}(\ball{p}{\rho} - \|\partial S\|(\ball{p}{\rho}) \right) \\
&\geq me^{m D(\rho)}\left(D^\prime(\rho) - \frac{\omega(\rho)}{\rho}\right)\frac{\|T\|\left(\ball{p}{\rho}\right)}{\rho^m} \\
&\quad + e^{mD(\rho)}\frac{1}{\rho^m} \left( \|T\|^{\prime}(\ball{p}{\rho} - \|\partial S\|(\ball{p}{\rho}) \right) \\
&\geq e^{mD(\rho)}\frac{1}{\rho^m} \left( \|T\|^{\prime}(\ball{p}{\rho} - \|\partial S\|(\ball{p}{\rho}) \right).
\end{aligned}   
\end{equation}  }

We denote by \((\cdot)^{\perp}\) the projection onto the approximate tangent plane to \(T\) at \(p\). Thus, using classical theory of slicing of currents, we have for a.e. \(0 \leq s<t\) that
\begin{equation*}
\int_{s}^{t} \|\partial S\|(\ball{p}{\rho})\diff \rho=\int_{\ball{p}{t} \setminus \ball{p}{s}} \frac{\left|{(x-p)}^{{T}}\right|}{|{(x-p)}|} d\|T\|.
\end{equation*}

Integrating \eqref{differential-equation2} on \((s,t)\subset (0,r_1)\), {using the last displayed inequality,} and applying \cite[Lemma 2.2]{hirsch2019uniqueness} (their proof works straightforwardly for \(f\), as in their notation, absolutely continuous, we just need to recall that \(\omega\) is absolutely continuous), we conclude
\begin{equation*}
\begin{aligned}
e^{m D(t)} \frac{\|T\|\left(\ball{p}{t}\right)}{t^{m}}-e^{m D(s)} \frac{\|T\|\left(\ball{p}{s}\right)}{s^{m}} & \geq \int_{\ball{p}{t} \setminus \ball{p}{s}} \frac{e^{mD(|z-p|)}}{|z-p|^{m}}\left(1-\frac{\left|(z-p)^{\perp}\right|}{|z-p|}\right) d\|T\|(z) \\
& \geq \int_{\ball{p}{t} \setminus \ball{p}{s}} e^{mD(|z-p|)} \frac{\left|(z-p)^{\perp}\right|^{2}}{2|z-p|^{m+2}} d\|T\|(z).
\end{aligned}
\end{equation*}
\end{proof}

A very useful result often implicitly used in our theory is the following. To enunciate and prove it, we fix the notation of the flat distance between two \(m\)-dimensional {integral currents \(T\) and \(S\), i.e., \(T, S\in\textbf{I}_m(U)\)}, \(U\) open and \(A\cc U\) as follows:
\begin{equation*}\mathcal{F}_{A}(T,S) = \infimo{\|R\|(A)+\|\tilde{T}\|(A)}{T-S=R+\partial\tilde{T} \ \text{with} \ R\in\textbf{I}_m(U) \ \text{and} \ \tilde{T}\in\textbf{I}_{m+1}(U)}.\end{equation*}

\begin{lemma}[Sequences of \(\omega_k\)-almost minimizers]\label{L:sequences-almost-minimal}
For each \(k\in\N\), let \(U\subset\R^{m+n}\) be an open set, we assume that
\begin{enumerate}[\upshape (a)]
    \item
    \(T_k\in\mathbf{I}_{m}(U)\) is \(\omega_k\)-almost area minimizing currents in \(U\),

    \item {$D_k(\rho):= \int_0^{\rho}\frac{\omega_k(\rho)}{\rho}\mathrm{d}\rho$ is such that $\sup_k (D_k(s_{\omega_k})) <+\infty$,}
    
    \item\label{seq_alm_min:b}  \(\partial T_k = 0\) for each \(k\in\N\),
    
    \item\label{seq_alm_min:c}  \(\limsup_{k\to+\infty}\|T_k\|(U)<+\infty,\)

    \item\label{seq_alm_min:d}  \(({r_1})_k\) from Proposition \ref{P:almost-monotonicity-times} satisfies \(r_0 := \liminf_{k\to +\infty}({r_1})_k>0,\)    
    
    \item\label{seq_alm_min:e}  \(\omega := \limsup_{k\to+\infty}\omega_k\) satisfies the assumptions of Definition \ref{D}, i.e., its domain contains \((0,r_0),\) it is absolutely continuous and \(\omega = o(1).\)
\end{enumerate} 
Then{, up to subsequences,} we have that there exists \(T\in\mathbf{I}_m(U)\) such that 
\begin{enumerate}[\upshape (i)]
    \item\label{seq_alm_min:i}  \(T_k \rightharpoonup T\),
    
    \item\label{seq_alm_min:ii}  \(T\) is \(\omega\)-almost minimizing in \(U,\)
    
    \item\label{seq_alm_min:iii}  \(\|T\|\leq \liminf_{k\to+\infty}\|T_k\|\leq\limsup_{k\to+\infty}\|T_k\| \leq \left(1 + \sup\omega \right) \|T\|,\)
    
    \item\label{seq_alm_min:iv}  \(\mathcal{F}(T_k,T)\to 0\) as \(k\) goes to \(+\infty\),
    
    \item\label{seq_alm_min:v}  \(\spt(T_k)\) converges in the Hausdorff distance sense to \(\spt(T)\).
\end{enumerate}
\end{lemma}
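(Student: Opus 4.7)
The plan is to combine Federer--Fleming compactness for integral currents with the almost monotonicity formula of Proposition \ref{P:almost-monotonicity-times}. Items (\ref{seq_alm_min:i}) and (\ref{seq_alm_min:iv}) follow at once: the assumptions $\partial T_k=0$ and $\limsup_k\|T_k\|(U)<+\infty$ give a uniform bound on $\|T_k\|(U)+\|\partial T_k\|(U)$, so the Federer--Fleming compactness theorem yields an integral $T\in\mathbf{I}_m(U)$ with $T_k\rightharpoonup T$ along a subsequence and, as a by-product of its proof, flat convergence $\mathcal{F}_{U'}(T_k,T)\to 0$ on each $U'\subset\subset U$; the weak continuity of $\partial$ also gives $\partial T=0$. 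The left inequality in (\ref{seq_alm_min:iii}) is just lower semicontinuity of mass under weak convergence.

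Next I would establish (\ref{seq_alm_min:v}). Proposition \ref{P:almost-monotonicity-times} applied to each $T_k$, combined with the uniform bound $\sup_k D_k(s_{\omega_k})<+\infty$ and the assumption $r_0:=\liminf_k (r_1)_k>0$, delivers a density lower bound $\|T_k\|(\ball{q}{r})\geq c_0\,r^m$ uniform in $k$, for every $q\in\spt(T_k)$ and every $r<\min\{r_0,(r_1)_k\}$, with $c_0$ depending only on $m$ and $\sup_k D_k$. For the inclusion $\spt(T)\subset\liminf\spt(T_k)$, pick $p\in\spt(T)$: one has $\|T\|(\ball{p}{r})>0$ for every $r>0$, so weak convergence forces $\|T_k\|(\ball{p}{r})>0$ for $k$ large and $\spt(T_k)\cap\ball{p}{r}\neq\emptyset$. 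For the opposite inclusion, if along a subsequence $p_{k_j}\in\spt(T_{k_j})$ converged to $p\notin\spt(T)$, the density lower bound together with upper semicontinuity on closed balls would force $\|T\|(\overline{\ball{p}{r}})\geq c_0 r^m>0$ for small $r$, a contradiction.

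Item (\ref{seq_alm_min:ii}) is the main step. Fix $p\in\spt(T)$, which coincides with $\spt(T)\setminus\spt(\partial T)$ since $\partial T=0$, and by (\ref{seq_alm_min:v}) choose $p_k\in\spt(T_k)$ with $p_k\to p$. Given $s<r_0$ and any $S\in\mathbf{I}_{m+1}(\ball{p}{s})$, for $k$ large $s_k:=s+|p_k-p|<(r_1)_k$ and $\spt(S)\subset\ball{p_k}{s_k}$, so the $\omega_k$-almost minimality of $T_k$ at $p_k$ gives
\[
\|T_k\|(\ball{p_k}{s_k})\leq(1+\omega_k(s_k))\|T_k+\partial S\|(\ball{p_k}{s_k}).
\]
Taking $t<s<t'$ with $\ball{p}{t}\subset\ball{p_k}{s_k}\subset\overline{\ball{p}{t'}}$ for all large $k$, I combine lower semicontinuity on the open ball on the left, upper semicontinuity on the closed ball on the right for $T_k+\partial S\rightharpoonup T+\partial S$, and $\omega_k(s_k)\to\omega(s)$, to obtain
\[
\|T\|(\ball{p}{t})\leq(1+\omega(s))\|T+\partial S\|(\overline{\ball{p}{t'}}).
\]
Letting $t\uparrow s$ and $t'\downarrow s$ along continuity radii of the Radon measures $\|T\|$ and $\|T+\partial S\|$ yields the desired inequality for a.e. $s<r_0$, and for every such $s$ by one-sided continuity in $s$ together with the absolute continuity of $\omega$.

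Finally, the remaining inequality in (\ref{seq_alm_min:iii}) is the most delicate point, and the main obstacle of the whole argument. I would cover a relatively compact open $V\subset\subset U$ by small balls $\ball{p_j}{s_j}$ and, on each one, apply the $\omega_k$-almost minimality of $T_k$ to a competitor $S_k^{(j)}$ extracted from the flat decomposition $T-T_k=R_k+\partial\tilde T_k$ produced by (\ref{seq_alm_min:iv}). The idea is to set $S_k^{(j)}:=\tilde T_k\res\ball{p_j}{\sigma_{k,j}}$ for a generic radius $\sigma_{k,j}\leq s_j$ chosen by a standard slicing argument so that the slice of $\tilde T_k$ on $\partial\ball{p_j}{\sigma_{k,j}}$ is an integral current of mass bounded by $\|\tilde T_k\|(\ball{p_j}{s_j})/\varepsilon$; this produces $\|T_k+\partial S_k^{(j)}\|(\ball{p_j}{\sigma_{k,j}})\leq\|T\|(\ball{p_j}{s_j})+o(1)$. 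Summing over a finite cover and passing to the $\limsup$ gives $\limsup_k\|T_k\|(V)\leq(1+\sup\omega)\|T\|(V)$, hence (\ref{seq_alm_min:iii}) by the arbitrariness of $V$. The delicate ingredient is precisely this slicing step, producing a compactly supported competitor out of the flat decomposition with controlled mass while preserving integrality.
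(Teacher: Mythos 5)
Your treatment of (i), (iv) and of the density lower bound via Proposition \ref{P:almost-monotonicity-times} matches the paper, and the flat-decomposition/slicing construction in your last paragraph is exactly the paper's mechanism. The genuine gap is in your proof of (ii), and it also infects the second half of (v): you invoke ``upper semicontinuity on the closed ball'' for $T_k+\partial S\rightharpoonup T+\partial S$, i.e.\ $\limsup_k\|T_k+\partial S\|\bigl(\overline{\ball{p}{t'}}\bigr)\le\|T+\partial S\|\bigl(\overline{\ball{p}{t'}}\bigr)$. Under weak convergence of currents the mass is only \emph{lower} semicontinuous; the reverse inequality on compact sets would require weak-$*$ convergence of the total variation measures, which weak convergence does not give (mass can be lost by cancellation in the limit). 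For almost minimizers this upper bound is precisely the nontrivial content of (iii) --- and even granting (iii) you would only get it up to the factor $1+\sup\omega$ and only for $\|T_k\|$, not for $\|T_k+\partial S\|$ --- so as written the step in (ii) is unsupported, indeed circular, since you prove (iii) only afterwards. The same unjustified principle appears in (v) when you deduce $\|T\|\bigl(\overline{\ball{p}{r}}\bigr)\ge c_0r^m$; in the paper that step is legitimized by the previously established bound \eqref{almost-convergence-supports}, i.e.\ by (iii).

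The repair is to run your final construction first and with a general competitor, which is what the paper does: write $T_k-T=R_k+\partial\tilde T_k$ with $\|R_k\|+\|\tilde T_k\|\to0$ (from (iv)), pick radii $s$ for which the slice of $\tilde T_k$ at radius $s$ has vanishing mass, use the identity
\begin{equation*}
T_k\res\ball{p}{s}=T\res\ball{p}{s}+R_k\res\ball{p}{s}-\langle\tilde T_k,d,s\rangle+\partial\bigl(\tilde T_k\res\ball{p}{s}\bigr),
\end{equation*}
and test the $\omega_k$-almost minimality of $T_k$ against the competitor $S-\tilde T_k\res\ball{p}{s}$ (with $S=0$ for (iii)). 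Together with lower semicontinuity on the left this yields $\|T\|(\ball{p}{s})\le(1+\omega(s))\|T+\partial S\|(\ball{p}{s})$ for a.e.\ $s$ and the two-sided mass estimate (iii) simultaneously; (v) then follows from (iii) plus the monotonicity-based density bound, exactly as in your and the paper's contradiction argument. Two smaller points: summing your ball inequalities over an overlapping cover in (iii) overcounts by the multiplicity of the cover, so prove the estimate ball-by-ball as above rather than by summation; and the passage $\omega_k(s_k)\to\omega(s)$ should be justified using the monotonicity of $\omega_k$ and a limit $s'\downarrow s$, not claimed directly.
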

\begin{proof}
Thanks to \eqref{seq_alm_min:b} and \eqref{seq_alm_min:c}, we can use standard compactness results (one can consult \cite[Section 4.2]{Fed}) to ensure the existence of \(T\in\textbf{I}_m(U)\) such that \(T_{k} \rightharpoonup T\), up to a subsequence, in the sense of currents so \eqref{seq_alm_min:i} is proved. The equivalence between \eqref{seq_alm_min:i} and \eqref{seq_alm_min:iv} is given by \cite[Theorem 7.2]{simon2014introduction} again using \eqref{seq_alm_min:b} and \eqref{seq_alm_min:c}. Now we want to prove \eqref{seq_alm_min:ii} and \eqref{seq_alm_min:iii}. Let us write \(T_{k} - T = R_{k} + \partial \tilde{T}_{k}\) in \(\ball{p}{R+2}, p\in U, R\in (0,{s_{\omega}})\) with 
\begin{equation*}{\lim\sup}_{k\to+\infty}\biggl(\|R_{k}\| (\ball{p}{R+1}) + \|\tilde{T}_{k}\| (\ball{p}{R+1})\biggr) = 0.\end{equation*}

Thus, since all the measures involved are Radon measures, for almost every \(s \in (R, R+1)\), it follows that
\begin{equation}\label{zero-mass-at-infinity}
{\lim\sup}_{k\to+\infty}\| R_{k}\| (\ball{0}{s}) = 0
\end{equation}  
and 
\begin{equation}\label{zeromassofslice}
{\lim\sup}_{k\to+\infty}\massnorm{\slice{\tilde{T}_{k}}{d}{s}} = 0.
\end{equation}

Note that (\ref{zeromassofslice}) follows directly from the formula of the slice and the fact that \(T_{k}\) converges to \(T\) in the sense of currents. We use again the slice formula to get
\begin{equation}\label{decomposition-of-blowup}
T_{k}\res {\ball{p}{s}} = T\res {\ball{p}{s}} + R_{k}\res {\ball{p}{s}} - \slice{\tilde{T}_{k}}{d}{s} + \partial (\tilde{T}_{k}\res {\ball{p}{s}}).
\end{equation}

The \(\omega_k\)-almost minimality condition gives
\begin{equation*} \| T_{k} \|(\ball{p}{s}) \leq \left(1+\omega_k(s)\right)\|T_{k}+\partial \tilde{T}_{k}\|\left(\ball{p}{s}\right).\end{equation*}

Putting into account the latter inequality, the triangle inequality and (\ref{decomposition-of-blowup}), we obtain that
\begin{equation*} \| T_{k} \|(\ball{p}{s}) \leq \left(1+\omega_k(s)\right)\biggl( \|T\|(\ball{p}{s}) + \|R_{k}\|(\ball{p}{s}) + \massnorm{\slice{\tilde{T}_{k}}{d}{s}} + 2\| \partial \tilde{T}_{k} \|(\ball{p}{s}) \biggr).\end{equation*} 

Note that, by construction, it follows that \(\| \partial \tilde{T}_{k} \|(\ball{p}{s}) \to 0\) as \(k\to+\infty\). Finally, by the lower semicontinuity of the mass, (\ref{zero-mass-at-infinity}), (\ref{zeromassofslice}) and the last equation passed through \(\limsup_{k\to+\infty}\), we conclude that
\begin{equation}\label{almost-convergence-supports}
\begin{aligned}
    \|T\|(\ball{p}{s})\leq \liminf_{k\to+\infty}\|T_k\|(\ball{p}{s})&\leq\limsup_{k\to+\infty}\|T_k\|(\ball{p}{s}) \\
    &\leq \left(1 + \limsup_{k\to+\infty}\omega_k(s) \right) \|T\|(\ball{p}{s}),
\end{aligned}
\end{equation} 
for any \(p\in U\) and a.e. \(s\in (0,R)\), which ensures \eqref{seq_alm_min:ii} and \eqref{seq_alm_min:iii}.

We proceed with the proof of \eqref{seq_alm_min:v} using a contradiction argument. We take \(K\subset\R^{m+n}\) a compact subset and assume that there exists \(\eta_0>0\) and \(q_k\in K\cap\spt(T_k)\) with \(\diff(q_k, \spt(T))>\eta_0>0\) for \(k\) sufficiently large. Since \(K\) is compact, up to a subsequence, we denote by \(q_0\) the limit of \((q_k)_{k\in\N}\). Clearly, we have that \(\diff_H(q_0, \spt(T))\geq\eta_0>0\). Hence, by convergence of Radon measures, i.e., \eqref{almost-convergence-supports} we have that
\begin{equation}\label{mass-zero}
0 = \|T\|\left(\ball{q_0}{\frac{3\eta_0}{4}}\right) \geq \limsup_{k\to+\infty}\|T_k\|\left(\ball{q_0}{\frac{\eta_0}{2}}\right).     
\end{equation} 

Provided \(k\) is sufficiently large, it is easy to see that \(\ball{q_k}{\frac{\eta_0}{8}} \subset \ball{q_0}{\frac{\eta_0}{4}}\) which in turn, by the \(\omega_k\)-almost monotonicity formula, Proposition \ref{P:almost-monotonicity-times}, implies that
\begin{equation}\label{contradiction-monotonicity}
\begin{aligned}
\|T_k\|\left(\ball{q_0}{\frac{\eta_0}{2}}\right) \overset{\eqref{almost-monotonicity-times}}&{\geq}  e^{m({D_k}(\sfrac{\eta_0}{4}) - {D_k}(\sfrac{\eta_0}{2}))}2^m\|T_k\|\left(\ball{q_k}{\frac{\eta_0}{8}}\right)\\
&\geq C(m,k,\eta_0)e^{m({D_k}(\sfrac{\eta_0}{4}) - {D_k}(\sfrac{\eta_0}{2}))}, 
\end{aligned}
\end{equation} 
where \(C(m,k,\eta_0)>C>0\) where $C>0$ is a positive constant independent of $k$ because \(q_k\in\spt(T_k)\) and $T_k$ is an integer current and so the density of $T_k$ in $q_k$ is always a positive integer number greater or equal to $1$. Notice that 
\begin{equation}\label{constant-positive}
\begin{aligned}
C(m,k,\eta_0)e^{m({D_k}(\sfrac{\eta_0}{4}) - {D_k}(\sfrac{\eta_0}{2}))} &\geq \frac{C(m,k,\eta_0)}{e^{m{D_k}(\sfrac{\eta_0}{2})}} \geq  \frac{C(m,k,\eta_0)}{e^{m{D_k}(\sfrac{\eta_0}{2})}} \\
&\geq \frac{C(m,k,\eta_0)}{e^{m\limsup_{k\to+\infty}\max_{s\in [0,{r_1}_k]}\left({D_k}(s)\right)}} \overset{(*)}{>} 0,
\end{aligned}    
\end{equation}
recall that \({r_1}_k\) from the \(\omega_k\)-almost monotonicity formula depends on \(m,n\) and \(\omega_k\), which ensures \((*)\) thanks to \eqref{seq_alm_min:d} and \eqref{seq_alm_min:e}. This finishes the proof of the theorem, since \eqref{mass-zero}, \eqref{contradiction-monotonicity} and \eqref{constant-positive} are in contradiction.
\end{proof}

\section{Almgren's stratification for \texorpdfstring{\(\omega\)}{Lg}-almost area minimizing currents}

The stratification process allows us to estimate the Hausdorff dimension of the set of points at which the current becomes infinitesimally more flat.  Although this seems to be a good measurement of regularity, since it mimics the behavior of smooth manifolds, the fact that the density and codimension are arbitrary makes the 'becoming flat' property insufficient to derive regularity. The famous Federer's example \(\{(z,w)\in\mathbb{C}^2: z^2 = w^3\}\) confirms it at the origin. 

Let us define regular and singular interior points.

\begin{defi}
Let \(T\) be an \(\omega\)-almost area minimizing integral current in \(\mathbb{R}^{m+n}\), we define the set of regular points as follows
\begin{equation*}
    \reg (T):= \{p\in\spt(T): \spt(T)\cap\ball{p}{r} \text{ is a } C^{1,\alpha} \text{ submanifold of } \R^{m+n} \text{ for some } \alpha,r>0\},
\end{equation*}
as well the set of singular points is defined as \(\sing(T):=\spt(T)\setminus\left(\spt(\partial T)\cup\reg(T)\right)\).
\end{defi}

Let us prove an important property for the density function and the existence of blowup limits. To that end, fix the notation \(\iota_{p,r}(x):= r^{-1}(x-p)\).

\begin{lemma}[Minimal tangent cones and density's upper semi-continuity]\label{L:UpperDens-TangCones}
Let \(T\) be an \(\omega\)-almost area minimizing integral current in \(\mathbb{R}^{m+n}\), $\int_0^{s_\omega}\frac{\omega(\rho)}{\rho}\mathrm{d}\rho < +\infty$, \(\partial T = 0\) and \(p\in \spt(T)\). Then \(\Theta^m(T,p)\geq 1\) and
\begin{enumerate}[\upshape (i)]
    \item \(\Theta^m(T,q)\) exists everywhere and is an upper semi-continuous functions of \(q\in\R^{m+n}\);

    \item For each sequence \(r_k\to 0\), there is a subsequence \(\{r_{k^{\prime}}\}\) such that \((\iota_{p,r_{k^{\prime}}})_{\#} T =: T_{p.r_{k^{\prime}}} \weak C\), where \(C\) is an integer \emph{area minimizing cone} in \(\R^{m+n}\) with \(\Theta^m(T,p) = \Theta^m(C,0)\).
\end{enumerate}
\end{lemma}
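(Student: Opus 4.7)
I would define $f_p(r):=e^{mD(r)}\|T\|(\ball{p}{r})/(\omega_m r^m)$, which is nondecreasing in $r$ by Proposition \ref{P:almost-monotonicity-times}; since the Dini condition forces $D(r)\to 0$ as $r\to 0^+$, the limit $\Theta^m(T,p)=\lim_{r\to 0^+}f_p(r)=\inf_{r>0}f_p(r)$ exists for every $p$. For upper semi-continuity, I would combine the monotonicity bound $\Theta^m(T,q)\leq f_q(r)$ with the Radon-measure inequality $\limsup_{q\to p}\|T\|(\ball{q}{r})\leq\|T\|(\overline{\ball{p}{r}})$ and then send $r\to 0$ along a sequence $r_j\to 0$ realising $\|T\|(\overline{\ball{p}{r_j}})=\|T\|(\ball{p}{r_j})$, yielding $\limsup_{q\to p}\Theta^m(T,q)\leq\Theta^m(T,p)$. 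The lower bound $\Theta^m(T,p)\geq 1$ for $p\in\spt(T)$ then follows from integer rectifiability: there are points $q_n\to p$ with $\Theta^m(T,q_n)\geq 1$, and upper semi-continuity closes the argument.

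\textbf{Plan for the tangent cone.} Given $r_k\to 0$, I set $T_k:=(\iota_{p,r_k})_{\#}T$ and verify the hypotheses of Lemma \ref{L:sequences-almost-minimal} on an exhaustion by balls $\ball{0}{R}$. A direct scaling check gives $\partial T_k=0$ and shows $T_k$ is $\omega_k$-almost minimizing with $\omega_k(s):=\omega(r_k s)$; the change of variables $u=r_k t$ yields $D_k(\rho)=\int_0^{r_k\rho}\omega(u)/u\,du\leq D(s_\omega)$ uniformly in $k$. The mass bound follows from the monotonicity of $f_p$ via
\[
\|T_k\|(\ball{0}{R})=\omega_m R^m\,e^{-mD(Rr_k)}f_p(Rr_k)\leq \omega_m R^m f_p(r_\ast)
\]
for any fixed $r_\ast$ once $Rr_k<r_\ast$. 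The monotonicity radius for $T_k$ rescales as $r_1/r_k\to+\infty$, and $\limsup_k\omega_k\equiv 0$ trivially meets the remaining assumptions. A diagonal extraction together with Lemma \ref{L:sequences-almost-minimal} therefore produces a subsequence $T_{k'}\weak C$ with $C$ area minimizing in $\R^{m+n}$ (since the limit modulus is identically zero).

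\textbf{Identifying $C$ as a cone with $\Theta^m(C,0)=\Theta^m(T,p)$.} For a.e.\ $R>0$, mass convergence combined with $D(Rr_{k'})\to 0$ delivers
\[
\frac{\|C\|(\ball{0}{R})}{\omega_m R^m}=\lim_{k'\to\infty}\frac{\|T\|(\ball{p}{Rr_{k'}})}{\omega_m(Rr_{k'})^m}=\Theta^m(T,p),
\]
so the mass ratio of $C$ at $0$ is constant in $R$, which in particular gives $\Theta^m(C,0)=\Theta^m(T,p)$. Applying the exact monotonicity formula (the $\omega\equiv 0$ case of Proposition \ref{P:almost-monotonicity-times}) to the area-minimizing $C$ forces $\int_{\ball{0}{t}\setminus\ball{0}{s}}|z^\perp|^2/|z|^{m+2}\,d\|C\|(z)=0$ for all $0<s<t$, hence $z^\perp=0$ for $\|C\|$-a.e.\ $z$, which is the cone condition. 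The main obstacle I foresee is the uniform scaling bookkeeping needed to invoke Lemma \ref{L:sequences-almost-minimal} on an exhaustion (uniform mass bounds, monotonicity radii, and control of the Dini integrals of $\omega_k$); once that is in place, the rigidity supplied by the exact monotonicity promotes the weak limit to an area-minimizing cone in the standard way.
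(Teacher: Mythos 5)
Your proposal is correct and follows essentially the same route as the paper: the $\omega$-almost monotonicity formula (Proposition \ref{P:almost-monotonicity-times}) with the Dini condition gives existence and upper semi-continuity of the density, the blowups are handled by verifying the hypotheses of Lemma \ref{L:sequences-almost-minimal} with the rescaled moduli $\omega_k(s)=\omega(r_k s)$ tending to zero (so the limit is area minimizing), and constancy of the mass ratio of $C$ forces the monotonicity integrand to vanish, yielding the cone property. You merely spell out details the paper leaves implicit (the u.s.c.\ argument, $\Theta^m(T,p)\geq 1$ via integer rectifiability, and the uniform mass and Dini bounds for the compactness lemma), which is consistent with the paper's proof.
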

\begin{rem}
The uniqueness of \(C\) is a long-standing open problem in the literature, i.e., as we change the sequence \(\{r_k\}_{k\in\N}\) we may end up with different limits. At this point, the conical property and the minimality are the takeaways.
\qed\end{rem}
\begin{proof}
The existence everywhere of the density is a direct consequence of the \(\omega\)-almost monotonicity formula (Proposition \ref{P:almost-monotonicity-times}), as well the upper semi-continuity of the density with respect to the point.  

It remains to prove the existence of a limit of the blowup sequence \(\{T_{p.r_{k^{\prime}}}\}_{k^{\prime}\in\N}\) and the conical property of this limit. 

The existence of \(C\) follows from Lemma \ref{L:sequences-almost-minimal} which also gives the minimality of \(C\). Indeed, since \(\omega_{k^{\prime}}(r) = \omega(r_{k^{\prime}}r)\) is the minimality error of \(T_{p,r_{k^{\prime}}}\), we have that \(\lim_{k^{\prime}}\omega_{r_{k^{\prime}}} \equiv 0\) hence \(C\) is an integer area minimizing current. 

We prove the conical property as follows. Since we are dealing with Radon measures, for almost every \(\rho>0\), we have
\begin{equation*}
\begin{aligned}
    \| T_{p,r_{k^{\prime}}}\|(\ball{0}{\rho}) \to \|C\|(\ball{0}{\rho}) \Rightarrow \\
    \Rightarrow \frac{\|C\|(\ball{0}{\rho})}{\omega_m\rho^m} = \lim_{k^{\prime}\to\infty} \frac{\| T_{p,r_{k^{\prime}}}\|(\ball{0}{\rho})}{\omega_m\rho^m} = \lim_{k^{\prime}\to\infty}\frac{\| T\|(\ball{p}{r_{k^{\prime}}\rho})}{\omega_m \rho^m r_{k^{\prime}}^m} = \Theta^m(T,p).
\end{aligned}
\end{equation*}

The last inequality stated that the mass ration of \(C\) is equal to a fixed constant, namely \(\Theta^m(T,p)\), for almost every \(\rho\). Such fact implies that the left-hand side of the \(\omega\)-almost monotonicity formula (Proposition \ref{P:almost-monotonicity-times}) vanishes. Combined with \cite[Lemma 2.40]{simon2014introduction}, we obtain that \(C\) is a cone.
\end{proof}

The blowup limits above (\(C\)'s of Lemma \ref{L:UpperDens-TangCones}) are called \textbf{tangent cones to \(T\) at \(p\)}. Whenever it occurs that \(\spt(C)\) is a \(m\)-dimensional subspace of \(\R^{m+n}\), \(C\) will be named a \textbf{tangent plane to \(T\) at \(p\)}.

Even though, Almgren's stratification theorem gives an estimate, it is \textbf{not} an estimate for the singular set, since existence of flat tangent planes does \textbf{not} imply regularity in the arbitrary density and codimension setting. 

\begin{thm}[Almgren's stratification theorem for \(\omega\)-almost area minimizing currents]\label{T:Almgren-stratification}
Let \(T\) be an \(\omega\)-almost area minimizing integral current in \(\mathbb{R}^{m+n}\), $\int_0^{s_\omega}\frac{\omega(\rho)}{\rho}\mathrm{d}\rho < +\infty$, and \(\partial T = 0\). Then, for any \(\eta>0\), there exists a \emph{tangent plane} to \(T\) at \(p\) for \(\cH^{m-2+\eta}\)-a.e. \(p\in\spt(T)\).
\end{thm}
\begin{proof}
As remarked, the proof is an adaptation of \cite[Theorem 3.3, Chapter 7]{simon2014introduction}. Recalling that \cite[Theorem 3.3, Chapter 7]{simon2014introduction} is just an application of \cite[Theorem A.4]{simon2014introduction}, we will as well make a list of definitions to be able to apply \cite[Theorem A.4]{simon2014introduction} (since \cite[Theorem A.4]{simon2014introduction} does not require any minimality condition). 

We now define the set \(\mathscr{F}\subset \mathbf{I}_m(\R^{m+n})\) as follows
\begin{equation*}
    \mathscr{F}:=\left \{ S: S = \lim_{i\to +\infty } (\iota_{x_i,\lambda_i})_{\#}T, \{x_i\}_{i\in\N} \text{ is a converging sequence}, \text{ and } \lambda_i \to 0 \right\}.
\end{equation*}

By compactness, i.e., using Lemma \ref{L:sequences-almost-minimal}, we have that any \(S\in\mathscr{F}\) is an integer \emph{area minimizing} \(m\)-current. It is also straightforward to verify that
\begin{equation*}
    (\iota_{q,\lambda})_{\#}\mathscr{F} = \mathscr{F} , \quad \forall q\in\R^{m+n}, \quad \lambda >0.    
\end{equation*}

We define, for any \(A\in 2^{\R^{m+n}}\), \(L\) an \(m\)-dimensional subspace of \(\R^{m+n}\), \(\lambda>0\), and \(q\in\R^{m+n}\), the height function as follows:
\begin{equation*}
    h( A, L, \lambda, q) = \sup_{y\in A\cap\ball{q}{\lambda}}|\bp_{L^{\perp}}(y-q)|.
\end{equation*}

For any \(S\in\mathscr{F}\) and \(\beta>0\), we set
\begin{equation*}
    \mathscr{T}_{\beta}(S):= \{q\in\spt(S): h(\spt(S),L,\lambda, q)\leq \beta \lambda \text{ for some } \lambda>0 \text{ and an } m\text{-plane } L\subset\R^{m+n}\}.
\end{equation*}

It is straightforward to see that
\begin{equation*}
    (\iota_{q,\lambda})_{\#}\mathscr{T}_{\beta}(S) = \mathscr{T}_{\beta}((\iota_{q,\lambda})_{\#}S) , \quad \forall q\in\R^{m+n}, \quad \lambda >0. 
\end{equation*}

Using the \(\omega\)-almost monotonicity formula (Proposition \ref{P:almost-monotonicity-times}), we readily check that
\begin{equation*}
    \begin{cases} 
    \{S_j\}_{j\in\N}\subset\mathscr{T}, S_j \weak S \in\mathscr{F} \\
    \{p_j\}_{j\in\N}\subset \spt(S_j), p_j \to p\in\mathscr{T}_{\beta}(S)
    \end{cases} \Rightarrow \quad p_j \in \spt(S_j).
\end{equation*}

Denoting \(N= \begin{pmatrix} m+n \\ n\end{pmatrix}\) and, for each \(S\in\mathscr{F}\), define \(\upvarphi_S^{0}(q):=\Theta^m(S,q)\) and \(\upvarphi_S^{k}(q):=\Theta^m(S,q)N^k_S(q)\) for \(k\in\{1,\ldots,N\}\) where \(N_S^k\) is the \(k\)-th component of the orientation \(\vec{S}(q)\). We also set 
\begin{equation*}
    \mathscr{F}: \{\upvarphi_S: S\in\mathscr{T}\} \text{ and }\mathrm{sing}\upvarphi_S := \spt \Theta^m(S,\cdot)\setminus \mathscr{T}_{\beta}(S).
\end{equation*}

By \cite[Theorem A.4 (\ddag)]{simon2014introduction} (the notations and definitions above matches the same notations and definition prior to Theorem A.4), we obtain the existence of \(d\in\N\cap[0,m-1]\) such that \(\mathrm{dim}_{H}(\mathrm{sing}\upvarphi_S)\leq d\) for each \(S\in\mathscr{F}\). In another words, we obtain for any \(\eta>0\)
\begin{equation*}
    0=\cH^{d+\alpha}(\mathrm{sing}\upvarphi_S) = \cH^{d+\alpha}(\spt(S)\setminus \mathscr{T}_{\beta}(S)), \quad \forall \beta>0.
\end{equation*}

Taking a sequence \(\beta_j\to 0\), we obtain from the last equality
\begin{equation*}
    0=\cH^{d+\alpha}(\spt(S)\setminus\cup_{j\in\N} \mathscr{T}_{\beta_j}(S)).
\end{equation*}

Furthermore, it is easy to see, by the definition of \(h\), that it holds
\begin{equation*}
    q\in \cup_{j\in\N} \mathscr{T}_{\beta_j}(S) \Leftrightarrow T \text{ has a tangent plane at }q.
\end{equation*}

Finally, by the last two displayed equations, it remains to prove that \(d\leq m-2\). We argue by contradiction, assume that \(d>m-2\) which implies, since \(d\) is an integer, \(d=m-1\). Also from \cite[Theorem A.4 (\ddag\ddag)]{simon2014introduction}, we also get the existence of \(S\in\mathscr{T}\) and an \(d\)-dimensional plane \(L\) such that \(\mathrm{sing}\upvarphi_S = L\). Since \(S\) is a minimizing cone without boundary, it is well known (see for instance \cite[Lemma 3.5, Chapter 7]{simon2014introduction}) that it splits into \(\a{\R^{m-1}}\times S_0 \) where \(S_0\) is an \(1\)-dimensional minimizing cone. Since \(S_0\) has no boundary, it has to be a line, i.e., \(S_0=k\a{\ell}\). Such a fact, surely implies that \(S = \a{\R^{m-1}}\times S_0 = \a{\R^{m-1}}\times k\a{\ell}\) is flat, hence \(\mathrm{sing}\upvarphi_S = \emptyset\) which is a contradiction. Then \(d\leq m-2\) and we are done.
\end{proof}

\section{Strong Lipschitz approximation}

The goal of this section is to prove our main theorem regarding the Lipschitz approximation, i.e., Theorem \ref{T:strong-approx}, in which we provide superlinear estimates. 

We now define the excess which is one of the most important concepts in the regularity theory of currents, since it measures the deviation of the current with respect to an \(m\)-plane.

\begin{defi}[Excess measure]
Let \(T\in\mathbf{I}_m(\R^{m+n})\) satisfying Assumption \ref{A:LA}. We define the \textbf{excess measure} as
\begin{equation*} \be_T(A) := \|T\|(A\times\R^n) - Q\cH^m(A), \quad \forall A\subset\baseball{\bp(p)}{r}, \end{equation*}
and the \textbf{cylindrical excess} as
\begin{equation*} \bE(T,\cyl{p}{s}) = \frac{\be_T(\baseball{\bp(p)}{s})}{\omega_m s^m}, \quad \forall s\in (0,r),\end{equation*}
where from now on we denote \(\omega_m := \cH^m(\baseball{0}{1})\).
\end{defi}

In what follows, we will work under two assumptions which are the constancy assumption \ref{A:constancy} and the no boundary assumption \ref{A:no-boundary} described below.

\begin{assump}\label{A:LA}
There exist a point \(p\in\spt(T)\setminus\spt(\partial T)\), a radius \(r>0\) and an integer \(Q\in\N\setminus\{0\}\) such that 
\begin{equation}\label{A:constancy}\eqname{(CA)} 
\left(\bp_\# T\right)\res\cyl{p}{4r} = \Theta^m(\bp_\# T,p)\a{\baseball{\bp(p)}{4r}} := Q\a{\baseball{\bp(p)}{4r}},
\end{equation}
\begin{equation}\label{A:no-boundary}\eqname{(NB)} 
\partial T \res\cyl{p}{4r} = 0.
\end{equation}
\end{assump}

The assumption above is not restrictive when the strong Lusin type Lipschitz approximation is applied in the regularity theory, because of the following lemma which is the analogous of \cite[Lemma 1.6]{DS4}.

\begin{lemma}\label{L:preliminaryLA}
Whenever \(T\) is an \(\omega\)-almost area minimizing integral current in \(\R^{m+n}\), {$\int_0^{s_\omega}\frac{\omega(\rho)}{\rho}\mathrm{d}\rho < +\infty$}, \(p\in\spt(T)\setminus\spt(\partial T)\), there exists a geometric constant \(\eta=\eta(m,n,Q)>0\), such that, if
\begin{align}
    \Theta^m(T, p) &= Q\label{constancy0}\\
    \frac{\|T\|(\ball{p}{4r})}{\omega_m (4r)^m} &\leq Q + \eta,\label{constancy2}\\
    \max\{\omega({s_{\omega}}), \bE(T, \ball{p}{4r}) &:= \bE(T, \ball{p}{4r}, \pi_0)\} < \eta, \label{constancy3}
\end{align}
then we have that \((\bp_{\pi_0})_{\#}T\res\ball{p}{4r} = Q\a{\baseball{\bp(p)}{4r}}\).
\end{lemma}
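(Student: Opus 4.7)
The plan is to identify the projected integer rectifiable $m$-current $S:=(\bp_{\pi_0})_\#(T\res\ball{p}{4r})$ in $\pi_0\cong\R^m$ as exactly $Q\a{\baseball{\bp(p)}{4r}}$, using a mass sandwich together with a constancy-type argument. Since $p\notin\spt(\partial T)$, for $r$ small enough we have $\partial T\res\ball{p}{4r}=0$, which I will assume below.

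First I would exploit the $\omega$-almost monotonicity formula: letting $s\to 0^+$ in \eqref{almost-monotonicity-times} together with $\Theta^m(T,p)=Q$ from \eqref{constancy0} gives
\[ Q\omega_m(4r)^m\le e^{m D(4r)}\|T\|(\ball{p}{4r}),\]
which, combined with \eqref{constancy2}, sandwiches $\|T\|(\ball{p}{4r})$ between $e^{-mD(4r)}Q\omega_m(4r)^m$ and $(Q+\eta)\omega_m(4r)^m$. Because $\omega(s_\omega)<\eta$ and the Dini hypothesis is in force, $D(4r)$ can be made arbitrarily small with $\eta$, so $\|T\|(\ball{p}{4r})$ is $O(\eta)$-close to $Q\omega_m(4r)^m$. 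Next, using the elementary bound $1-|\det(\bp_{\pi_0}|_\tau)|\le C|\vec\tau-\vec\pi_0|^2$ on unit simple $m$-vectors and integrating against $d\|T\|$, the small excess hypothesis \eqref{constancy3} yields
\[\|T\|(\ball{p}{4r})-\mathbf{M}(S)\le C\,\bE(T,\ball{p}{4r})\,\omega_m(4r)^m<C\eta(4r)^m,\]
together with the trivial $\mathbf{M}(S)\le\|T\|(\ball{p}{4r})$ coming from $\bp_{\pi_0}$ being $1$-Lipschitz; thus $\mathbf{M}(S)$ is also $O(\eta)$-close to $Q\omega_m(4r)^m$.

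Now I would apply the constancy theorem. Since $\partial T\res\ball{p}{4r}=0$, one has $\partial S=(\bp_{\pi_0})_\#\langle T,|\cdot-p|,4r\rangle$, supported in $\bp_{\pi_0}(\spt(T)\cap\partial\ball{p}{4r})$. A standard height-tilt estimate (itself a quantitative consequence of Proposition \ref{P:almost-monotonicity-times} applied at points of $\spt(T)$) shows that $\spt(T)\cap\ball{p}{4r}$ lies in a thin tubular neighborhood of $\bp(p)+\pi_0$, so that $\bp_{\pi_0}(\spt(T)\cap\partial\ball{p}{4r})$ sits in a thin annulus around $\partial\baseball{\bp(p)}{4r}$. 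Consequently $\partial S\res\baseball{\bp(p)}{(4-\delta)r}=0$ for some $\delta=\delta(\eta)\to 0$, and the constancy theorem in $\R^m$ produces an integer $k$ with $S\res\baseball{\bp(p)}{(4-\delta)r}=k\a{\baseball{\bp(p)}{(4-\delta)r}}$. Matching $|k|\omega_m((4-\delta)r)^m$ with the two-sided bound on $\mathbf{M}(S)$ forces $|k|=Q$ as soon as $\eta$ is small enough; the sign (so $k=Q$) is fixed by $\Theta^m(T,p)=Q>0$ together with the orientation of $T$ near $p$ being $L^2$-close to $\vec\pi_0$. Letting $\delta\to 0$ and using that the limiting exceptional set is $\cH^m$-negligible recovers the claim on all of $\baseball{\bp(p)}{4r}$.

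The delicate step is the height-tilt control: converting the $L^2$ excess bound into an $L^\infty$ closeness of $\spt T$ to $\pi_0$, so that the projected sphere-slice concentrates near $\partial\baseball{\bp(p)}{4r}$. In the area-minimizing case this is a classical Allard-type estimate; in the $\omega$-almost minimizing setting one must recover it directly from Proposition \ref{P:almost-monotonicity-times} and the smallness in \eqref{constancy2}--\eqref{constancy3}, taking care not to invoke the Lipschitz approximation itself (which is the very goal of the section).
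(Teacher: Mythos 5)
Your strategy (direct mass pinching plus the constancy theorem) is genuinely different from the paper's proof, which is a soft compactness/contradiction argument: one takes a sequence $T_k$ with $\omega_k\to 0$ uniformly and $\eta_k\to 0$, extracts a limit $T_\infty$ via Lemma \ref{L:sequences-almost-minimal}, shows $\bE(T_\infty,\ball{p}{4r},\pi_0)=0$ forces $T_\infty=Q\a{\baseball{\bp(p)}{4r}}$, and then transfers the projection identity back to $T_k$ for large $k$ by the Constancy Lemma and weak convergence. Your quantitative route, however, contains a step that fails: the claim that $D(4r)$ ``can be made arbitrarily small with $\eta$''. The hypotheses give only the pointwise bound $\omega\le\omega(s_\omega)<\eta$ and \emph{finiteness} of the Dini integral; they do not bound its size. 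For example $\omega\equiv\eta/2$ on $[a,s_\omega]$ and $\omega(\rho)=\eta\rho/(2a)$ for $\rho<a$ is admissible for every $a>0$, and then $D(4r)\ge\tfrac{\eta}{2}\log(4r/a)$ is arbitrarily large as $a\downarrow 0$. Since $\eta=\eta(m,n,Q)$ must be geometric, you cannot shrink it to absorb $e^{-mD(4r)}$: the almost-monotonicity formula \eqref{almost-monotonicity-times} only yields $\|T\|(\ball{p}{4r})\ge e^{-mD(4r)}Q\omega_m(4r)^m$, which may well lie below $(Q-1)\omega_m(4r)^m$. Your sandwich then still gives $|k|\le Q$ from \eqref{constancy2}, but it can no longer exclude $|k|\le Q-1$, so the identification $k=Q$ --- the whole content of the lemma --- is not reached. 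The paper avoids any quantitative use of the monotonicity rate precisely because \eqref{constancy0} is exploited only through the limit along the contradiction sequence (modulo the uniform Dini hypothesis of Lemma \ref{L:sequences-almost-minimal}, which the paper needs there).

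The second gap is the height--tilt estimate, which you correctly flag as the crux but do not prove, and which cannot simply be quoted: the standard sup-height bounds in this circle of ideas are proved \emph{under} Assumption \ref{A:LA}, i.e.\ assuming exactly the identity $(\bp_{\pi_0})_{\#}(T\res\ball{p}{4r})=Q\a{\baseball{\bp(p)}{4r}}$ you are trying to establish, so invoking them is circular, and an independent derivation for $\omega$-almost minimizers from a one-scale $L^2$ excess bound is of essentially the same difficulty as the lemma itself (and is usually again obtained by compactness). The reason you need it at all is the restriction to the ball: $\partial\bigl((\bp_{\pi_0})_{\#}(T\res\ball{p}{4r})\bigr)$ is the projected sphere slice, which a priori may sit anywhere in the closed disc, so without $L^\infty$ closeness of $\spt(T)$ to $\pi_0$ the constancy theorem gives nothing. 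The paper's argument handles this softly, since the Hausdorff convergence of supports in \eqref{seq_alm_min:v} of Lemma \ref{L:sequences-almost-minimal} confines $\spt(T_k)\cap\ball{p}{4r}$ to a vanishing neighborhood of $\pi_0$ along the sequence. If you want a non-compactness proof, you should either phrase the statement for the cylindrical restriction as in \eqref{A:constancy}, where \eqref{A:no-boundary} alone confines the projected boundary to $\partial\baseball{\bp(p)}{4r}$, or first prove a height bound for $\omega$-almost minimizers from scratch; as written, both the pinching step and the localization step are open.
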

\begin{proof}
Assume by contradiction that we have a sequence \(T_k\) of \(\omega_k\)-almost area minimizing integral currents with \(\omega_k\) converging uniformly to \(f\equiv 0\) as \(k\to\infty\), and a sequence of real numbers \(\eta_k\to 0\) as \(k\) goes to \(+\infty\) both satisfying \eqref{constancy2} and \eqref{constancy3} such that 
\begin{equation*} (\bp_{\pi_0})_{\#}T_k\res\ball{p}{4r} \neq Q\a{\baseball{\bp(p)}{4r}}, \forall k\in \N.\end{equation*}

Since \(\| T_k \|(\ball{p}{4r}) + \|\partial T_k\|(\ball{p}{4r})\) is uniformly bounded w.r.t. \(k\), we can then apply the Compactness Theorem for integral currents which gives the existence of an integral current \(T_\infty\in\mathbf{I}_m(\ball{p}{4r})\) such that \(T_k\rightharpoonup T_\infty\) in the sense of currents. By the \(\omega\)-almost monotonicity formula, Proposition \ref{monotonicity-formula}, we have that the convergence in the sense of currents implies the convergence of the measures and the Hausdorff convergence of the supports, Lemma \ref{L:sequences-almost-minimal}. Hence, by \eqref{constancy3}, we directly obtain that 
\begin{equation*} \bE(T_\infty, \ball{p}{4r}, \pi_0) = 0, \end{equation*}
which leads to \(\spt(T_\infty)\subset \ball{p}{4r}\cap\pi_0\) and then \(T_\infty = Q_\infty\a{\baseball{\bp(p)}{4r}}\) for some integer \(Q_\infty\). Using the convergence of \(\|T_k\| \rightharpoonup \|T_\infty\|\), \eqref{constancy2} and \eqref{constancy0}, we obtain that \(Q_\infty = Q\). Now, since \(T_k\) has no boundary in \(\ball{p}{r}\), i.e., \(\partial T_k\res\ball{p}{r} = 0\), for \(k\) large enough, we obtain by the Constancy Lemma (\cite[4.1.17]{Fed}) that 
\begin{equation*} (\bp_{\pi_0})_{\#}T_k \res\ball{p}{4r} = Q_k\a{\baseball{\bp(p)}{4r}}. \end{equation*}

Since we have the convergence of \(T_k\) to \(T_\infty\) in the sense of currents, we obtain that \(Q_k = Q\), for \(k\) large enough, which gives the contradiction. 
\end{proof}

\subsection{Weak Lipschitz approximation}

The weak Lipschitz approximation does not need any minimality condition to be proven, then we refer the reader directly to \cite[Proposition 2.2]{DS3}.

\begin{prop}[Weak Lusin type Lipschitz approximation]\label{P:weak-lip}
There exist two positive geometric constants \(\varepsilon_{wl} = \varepsilon_{wl}(m,n,Q)> 0 \) and \(C_{wl} = C_{wl}(m,n,Q)> 0 \) such that, if 
\begin{enumerate}[\upshape (a)]
    \item \(T\) satisfies Assumption \ref{A:LA},
    
    \item \(\bE(T,\cyl{p}{4r})<\varepsilon_{wl}\),
    
    \item \(\beta\in (0,\frac{1}{2m}]\),
\end{enumerate}
hold, we have that exist a set \(K\subset\baseball{\bp(p)}{\frac{7r}{2}}\) and a Lipschitz function \(f:\baseball{\bp(p)}{\frac{7r}{2}} \to \Is{Q}\) which satisfies
\begin{align}
\operatorname{Lip}(f) &\leq C_{wl}\bE(T,\cyl{p}{4r})^{\beta},\label{E:weak-app:lip-bound} \\
\mathbf{G}_f\res\left(K\times \R^n\right) &= T\res \left(K\times \R^n\right),\label{E:weak-app:=inK}\\
\| T-\mathbf{G}_f \|\left( \baseball{\bp(p)}{\frac{7r}{2}}\setminus K \right) &\leq C_{wl} r^m \bE(T,\cyl{p}{4r})^{1-2\beta},\nonumber \\
\cH^m\left(\baseball{\bp(p)}{\frac{7r}{2}}\setminus K \right) &\leq C_{wl} r^m \bE(T,\cyl{p}{4r})^{1-2\beta}. \nonumber 
\end{align}
\end{prop}

Let us now enunciate the analogous of \cite[Lemma 2.2]{DSS2} with our more general almost minimality condition.

\begin{lemma}[Homotopy Lemma]\label{L:homotopy}
Let \(T\) be \(\omega\)-almost area minimizing which satisfies Assumption \ref{A:LA}, {$\int_0^{s_\omega}\frac{\omega(\rho)}{\rho}\mathrm{d}\rho < +\infty$}, and \(3r<{s_{\omega}}\). There are positive geometric constants \(\varepsilon_h=\varepsilon_h(m,n,Q)>0\) and \(C_h=C_h(m,n,Q)>0\) such that, whenever \(\bE(T,\cyl{p}{4r})<\varepsilon_h<\varepsilon_{wl}\), it holds
\begin{equation*} \|T\|(\cyl{p}{3r})\leq \|R\|(\cyl{p}{3r}) + C_h\omega (3r) r^{m}\bE(T,\cyl{p}{4r})^{\frac{1}{2}},\end{equation*}
for every \(R\in\mathbf{I}_m(\cyl{p}{3r})\) with \(\partial R = \partial \left(T\res\cyl{p}{3r}\right)\).
\end{lemma}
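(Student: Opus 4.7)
The strategy is to test the $\omega$-almost minimality of $T$ against a carefully built competitor $Z$ assembled from $R$ inside $\cyl{p}{3r}$, the weak Lipschitz graph $\mathbf{G}_f$ of Proposition \ref{P:weak-lip} over a thin annulus just outside $\cyl{p}{3r}$, and an interpolating cap $C_0$ bridging the two inner boundaries. Because the $\omega$-almost minimality is formulated over balls rather than cylinders, the construction must be enclosed in a ball $\ball{p}{\tau}$ of radius only marginally larger than $3r$; the slack $\tau-3r$ will be controlled by the height bound supplied by the Lipschitz approximation. The factor $\omega(3r)$ in the final error will come from the multiplicative coefficient $(1+\omega(\tau))$ in Definition \ref{D}, while the $\bE^{1/2}$ factor will come from the aggregate mass of $C_0$ together with the annular and cylinder-to-ball correction terms, all ultimately reducible to the height-versus-excess inequality satisfied by the weak Lipschitz graph.

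Concretely, I first apply Proposition \ref{P:weak-lip} with $\beta=1/(2m)$ to obtain a Lipschitz multi-valued map $f$ on $\baseball{\bp(p)}{7r/2}$ with $\operatorname{Lip}(f)\leq C\bE^{1/(2m)}$, a coincidence set $K$, and the controls $\cH^m(\baseball{\bp(p)}{7r/2}\setminus K)+\|T-\mathbf{G}_f\|(\baseball{\bp(p)}{7r/2}\setminus K)\leq Cr^m\bE^{1-1/m}$. In particular, the height of $\mathbf{G}_f$ in $\cyl{p}{7r/2}$ is $\leq Cr\bE^{1/(2m)}$. By a Fubini-type averaging on radii in a sub-interval $(3r,3r+\delta)$ chosen so that $3r+\delta<s_\omega$, I fix a good radius $s$ at which the slice $\slice{T-\mathbf{G}_f}{\bp}{s}$ vanishes, i.e., the circle $\partial\baseball{\bp(p)}{s}$ lies in $K$ up to an $\cH^{m-1}$-null set. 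A Chebyshev-type slicing at the fixed radius $3r$ then produces the cap $C_0$, constructed for instance as the cone-type filling of $\slice{T-\mathbf{G}_f}{\bp}{3r}$, which bridges $\partial R=\slice{T}{\bp}{3r}$ to $\slice{\mathbf{G}_f}{\bp}{3r}$, with mass bound $\|C_0\|\leq Cr^m\bE^{1/2}$ coming from the product of the height and the defect slice mass.

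The competitor is then
\begin{equation*}
Z := R + \mathbf{G}_f\res\bigl((\baseball{\bp(p)}{s}\setminus\baseball{\bp(p)}{3r})\times\R^n\bigr) + C_0,
\end{equation*}
so that $\partial Z=\partial(T\res\cyl{p}{s})$. Choose $\tau=\sqrt{s^2+h^2}$ with $h$ the maximal vertical extent of $\cyl{p}{s}\cap\spt(T\cup Z)$, so that $\ball{p}{\tau}$ contains all relevant supports and $\tau<s_\omega$ for $\bE$ small. Setting $S:=p\cone(Z-T\res\cyl{p}{s})\in\mathbf{I}_{m+1}(\ball{p}{\tau})$, the $\omega$-almost minimality applied in $\ball{p}{\tau}$ yields
\begin{equation*}
\|T\|(\ball{p}{\tau})\leq (1+\omega(\tau))\bigl(\|Z\|(\cyl{p}{s})+\|T\|(\ball{p}{\tau}\setminus\cyl{p}{s})\bigr).
\end{equation*}
Expanding $\|Z\|(\cyl{p}{s})=\|R\|(\cyl{p}{3r})+\|\mathbf{G}_f\|(\mathrm{annulus})+\|C_0\|$, invoking the monotonicity of $\omega$ to replace $\omega(\tau)$ by $C\omega(3r)$, and absorbing the cap mass $\|C_0\|$, the annular defect $|\|\mathbf{G}_f\|-\|T\||(\mathrm{annulus})$, and the cylinder-to-ball transition $\|T\|(\ball{p}{\tau}\setminus\cyl{p}{s})$ — all of order $Cr^m\bE^{1/2}$ — into the $(1+\omega(\tau))$ coefficient will produce the stated inequality.

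The main obstacle will be the delicate bookkeeping that forces every correction term to enter with the announced $Cr^m\bE^{1/2}$ scaling rather than a cruder $Cr^m$ bound, and that keeps the final error packaged as $\omega(3r)r^m\bE^{1/2}$ rather than an additive combination $\omega(3r)r^m + r^m\bE^{1/2}$; this rests on exploiting the $\bE^{1/(2m)}$ height bound from Proposition \ref{P:weak-lip} throughout. Simultaneously, one must verify at each step that the slicing radius $s$ can be chosen close enough to $3r$ for $\tau$ to remain below $s_\omega$ while $\omega(\tau)\leq C\omega(3r)$, which is enabled by the Dini integrability of $\omega(\rho)/\rho$.
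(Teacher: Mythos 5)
There is a genuine gap, and it is exactly the point you postpone as ``delicate bookkeeping''. With your competitor $Z=R+\mathbf{G}_f\res\bigl((\baseball{\bp(p)}{s}\setminus\baseball{\bp(p)}{3r})\times\R^n\bigr)+C_0$, the inequality you obtain from Definition \ref{D} contains, with coefficient $1$ (equivalently $1+\omega(\tau)$), the purely additive corrections: the cap mass $\|C_0\|$, the annular discrepancy between $\|\mathbf{G}_f\|$ and $\|T\|$, and the cylinder-to-ball transition term. These are of size $Cr^m\bE^{1-2\beta}$ (at best $Cr^m\bE^{1/2}$) and are \emph{not} multiplied by $\omega$; there is no mechanism by which they can be ``absorbed into the $(1+\omega(\tau))$ coefficient''. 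Moreover the genuinely multiplicative term is $\omega(\tau)\|R\|(\cyl{p}{3r})$, and $\|R\|$ is of order $r^m$ for a general admissible $R$, so this piece is of order $\omega(3r)r^m$, again without the factor $\bE^{1/2}$. Hence your scheme can only yield an estimate of the type $\|T\|(\cyl{p}{3r})\leq\|R\|(\cyl{p}{3r})+C\omega(3r)r^m+Cr^m\bE(T,\cyl{p}{4r})^{1/2}$, which is strictly weaker than the asserted error $C_h\omega(3r)r^m\bE(T,\cyl{p}{4r})^{1/2}$. A smaller but related problem: Assumption \ref{A:LA} carries no height bound, so the claim that $\spt(T)\cap\cyl{p}{s}$ fits inside $\ball{p}{\tau}$ with $\tau-3r$ controlled by $Cr\bE^{1/(2m)}$ (and with $\tau<s_\omega$ for a \emph{geometric} $\varepsilon_h$, when $s_\omega$ may be barely larger than $3r$) is not justified as stated.

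The paper's proof is structured precisely to avoid these additive terms: it quotes from the proof of \cite[Lemma 2.2]{DSS2} the existence (established there without any minimality) of a filling $S^{\prime\prime}\in\mathbf{I}_{m+1}(\cyl{p}{3r})$ with $\partial S^{\prime\prime}=(T-R)\res\cyl{p}{3r}$ and $\|S^{\prime\prime}\|(\cyl{p}{3r})\leq C_0r^{m+1}\bE(T,\cyl{p}{3r})^{1/2}$, and then tests the $\omega$-almost minimality of $T$ with $S^{\prime\prime}$ itself. Because the competitor then coincides with $R$ exactly on the cylinder, no additive discrepancy of order $r^m\bE^{1/2}$ appears with unit coefficient: every error term comes multiplied by $\omega$, and the extra factor $\bE^{1/2}$ is fed in through the mass bound on $S^{\prime\prime}$ together with standard density estimates. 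In effect you are re-deriving by hand the interior construction of \cite[Lemma 2.2]{DSS2} (graph over an annulus, cap, cone filling), which is legitimate, but the way you assemble it as a graph-plus-cap competitor destroys the structural feature --- exact replacement by $R$ inside $\cyl{p}{3r}$ via an isoperimetric-type filling --- that makes the final error proportional to $\omega$. To repair your argument you would need to convert your construction into such a filling $S^{\prime\prime}$ (i.e., fill the cycle $(T-R)\res\cyl{p}{3r}$ with controlled $(m+1)$-mass) and then apply the $\omega$-minimality with that filling, which is exactly the paper's route.
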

\begin{proof}
In the proof of \cite[Lemma 2.2]{DSS2}, the authors do not use any minimality property to show the existence of an \((m+1)\)-current \(S^{\prime\prime}\in\mathbf{I}_{m+1}(\cyl{p}{3r})\) (notice that \(S^{\prime\prime}\) is the same notation that the authors use for such current) such that
\begin{align}
\partial S^{\prime\prime} &= \left(T-R\right)\res\cyl{p}{3r}, \label{L:homot:1} \\
\| S^{\prime\prime} \|(\cyl{p}{3r}) &\leq C_0r^{m+1}\bE(T,\cyl{p}{3r})^{\frac{1}{2}}. \label{L:homot:2}
\end{align}

Then, applying our \(\omega\)-minimality condition on \(T\), we obtain that 
\begin{equation*}
\begin{aligned}
\|T\|(\cyl{p}{3r}) &\leq \left(1+\omega(r)\right)\|T-\partial S^{\prime\prime}\|(\cyl{p}{3r}) \\
\overset{\eqref{L:homot:1}}&{\leq} \|R\|(\cyl{p}{3r}) + \omega(r)\|T-\partial S^{\prime\prime}\|(\cyl{p}{3r}) \\
\overset{(\ast)}&{\leq} \|R\|(\cyl{p}{3r}) + C_1\frac{\omega(r)}{r}r^{m+1} \\
\overset{(\ast)}&{\leq} \|R\|(\cyl{p}{3r}) + C_2\frac{\omega(r)}{r}\|S^{\prime\prime}\|(\cyl{p}{3r}) \\
\overset{\eqref{L:homot:2}}&{\leq} \|R\|(\cyl{p}{3r}) + C_3\omega(r)r^{m}\bE(T,\cyl{p}{3r})^{\frac{1}{2}},
\end{aligned}
\end{equation*}
where in \((\ast)\) we use standard density estimates, see for instance \cite[Section 4.1.28]{Fed}.
\end{proof}

\begin{defi}[Barycenter of a \(Q\)-tuple]
We define \(\etaa(P) = \frac{1}{Q}\sum_{i=1}^Q P_i\) for every \(P = \sum_{i=1}^Q\a{P_i}\in\Is{Q}\).
\end{defi}

\begin{prop}[Approximation by minimizers of the Dirichlet energy]\label{P:harm-approx}
Given two positive real numbers \(\eta>0\) and \(\beta\in (0,\frac{1}{2m})\), there exists a positive geometric constant \(\varepsilon_{ha}=\varepsilon_{ha}(m,n,Q,\eta,\beta)>0\) such that, if
\begin{enumerate}[\upshape (a)]
    \item\label{harm_approx:(a)} \(T\) is \(\omega\)-almost area minimizing, {$\int_0^{s_\omega}\frac{\omega(\rho)}{\rho}\mathrm{d}\rho < +\infty$,} and satisfies Assumption \ref{A:LA},
    
    \item\label{harm_approx:(b)} \(\bE(T,\cyl{p}{4r})<\varepsilon_{ha}<\varepsilon_h\),
    
    \item\label{harm_approx:(c)} \(\omega(r) \leq \varepsilon_{ha}\bE(T,\cyl{p}{4r})^{\frac{1}{2}}\),
\end{enumerate}
hold, then there exists \(w:\baseball{\bp(p)}{\frac{7r}{2}} \to \Is{Q}\) which minimizes the Dirichlet energy and satisfies
\begin{align*}
\int_{\baseball{\bp(p)}{2r}}|Df|^2\diff\!\cH^m  &\leq \eta\be_T(\baseball{\bp(p)}{4r}),\\
\frac{1}{r^2}\int_{\baseball{\bp(p)}{2r}}\cG(f,g)^2\diff\!\cH^m  + \int_{\baseball{\bp(p)}{2r}}\left(|Df|-|Dw|\right)^2\diff\!\cH^m  &\leq \eta\be_T(\baseball{\bp(p)}{4r}),\\
\int_{\baseball{\bp(p)}{2r}}|D(\etaa\circ f) - D(\etaa\circ w)|^2\diff\!\cH^m  &\leq \eta\be_T(\baseball{\bp(p)}{4r}).
\end{align*}
where \(f\) is the weak Lusin type Lipschtiz approximation given by Proposition \ref{P:weak-lip}.
\end{prop}
\begin{proof}
Note that condition \eqref{harm_approx:(a)} and \eqref{harm_approx:(b)} put us in position to apply the Homotopy Lemma (Lemma \ref{L:homotopy}) and then the proof of \cite[Theorem 3.1]{DS3} follows straightforwardly.
\end{proof}
\begin{rem}
Note that the approximation above at interior points is weaker than the one we can get at boundary points. Indeed, for integral \(m\)-currents that minimize the area in \(\R^{m+n}\) and take the boundary with arbitrary boundary multiplicity, in \cite[Theorem 4.12]{nardulli2022density}, we proved that \(w\) is indeed multi-copies of a classical harmonic function. However, at interior points, we can prove that \(w\) is a minimizer of the Dirichlet energy but not necessarily \(Q\) copies of an harmonic function.
\qed\end{rem}

\begin{lemma}[Weak excess decay]\label{L:weak-excess-decay}
For every \(\eta>0\), there exist a positive geometric constants \(\varepsilon_{we}=\varepsilon_{we}(m,n,Q,\eta)>0\) and a positive constant \(C_{we}=C_{we}(m,n,Q,\eta)>0\), if 
\begin{enumerate}[\upshape (a)]
    \item\label{weak-decay:(a)} \(T\) is \(\omega\)-almost area minimizing under Assumption \ref{A:LA} and {$\int_0^{s_\omega}\frac{\omega(\rho)}{\rho}\mathrm{d}\rho < +\infty$},
    
    \item\label{weak-decay:(b)} \(\bE(T,\cyl{p}{4r})<\varepsilon_{we}<\varepsilon_{ha}\),
    
    \item\label{weak-decay:(c)} \(A\subset\baseball{\bp(p)}{r}\) Borel set with \(\cH^m(A) \leq \varepsilon_{we}\omega_mr^m\),
\end{enumerate}
hold, then we have that
\begin{equation}\label{E:weak-excess-decay}
\be_T(A) \leq \eta\be_T(\baseball{\bp(p)}{4r})r^m + C_{we}\omega(r)^2 r^{m}.
\end{equation}  
\end{lemma}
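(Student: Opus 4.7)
The plan is to combine the weak Lipschitz approximation (Proposition \ref{P:weak-lip}), the harmonic approximation (Proposition \ref{P:harm-approx}) and the classical higher integrability for multi-valued Dirichlet minimizers. Set $\bE := \bE(T,\cyl{p}{4r})$ and fix $\beta \in (0, \tfrac{1}{2m})$ to be chosen later. First I would apply Proposition \ref{P:weak-lip} to obtain a multi-valued Lipschitz map $f : \baseball{\bp(p)}{7r/2} \to \Is{Q}$ with $\operatorname{Lip}(f) \leq C_{wl}\bE^\beta$, together with a coincidence set $K \subset \baseball{\bp(p)}{7r/2}$ satisfying $\mathbf{G}_f\res(K\times\R^n) = T\res(K\times\R^n)$ and the usual smallness of $\cH^m(\baseball{\bp(p)}{7r/2}\setminus K)$ and $\|T-\mathbf{G}_f\|\bigl((\baseball{\bp(p)}{7r/2}\setminus K)\times\R^n\bigr)$ of order $r^m\bE^{1-2\beta}$.

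From the subadditivity $|\|T\|(B) - \|\mathbf{G}_f\|(B)| \leq \|T-\mathbf{G}_f\|(B)$ and the area formula applied to the graph, combined with the Taylor expansion of the Jacobian in the regime $\operatorname{Lip}(f) \leq C\bE^\beta$, one obtains
\[
\be_T(A) \leq \tfrac{1}{2}(1+C\bE^{2\beta})\int_A |Df|^2\,d\cH^m + \|T-\mathbf{G}_f\|\bigl((\baseball{\bp(p)}{7r/2}\setminus K)\times\R^n\bigr),
\]
with the last summand controlled by $Cr^m\bE^{1-2\beta}$. The task is then to bound $\int_A|Df|^2$ and to absorb the residual $r^m\bE^{1-2\beta}$ error.

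I would then split into two regimes according to hypothesis (c) of Proposition \ref{P:harm-approx}. In the regime $\omega(r) > \varepsilon_{ha}\bE^{1/2}$ one has $\bE \leq C\omega(r)^2$ and thus $\be_T(A) \leq \be_T(\baseball{\bp(p)}{4r}) \leq \omega_m(4r)^m\bE \leq C\omega(r)^2 r^m$, which is directly absorbed into the $C_{we}\omega(r)^2 r^m$ term. In the complementary regime $\omega(r) \leq \varepsilon_{ha}\bE^{1/2}$, Proposition \ref{P:harm-approx} produces a Dirichlet minimizer $w : \baseball{\bp(p)}{2r}\to\Is{Q}$ with $\int_{\baseball{\bp(p)}{2r}}(|Df|-|Dw|)^2\,d\cH^m \leq \eta'\be_T(\baseball{\bp(p)}{4r})$, for any $\eta' > 0$ at the price of further shrinking $\varepsilon_{we}$. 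The elementary bound $|Df|^2 \leq 2(|Df|-|Dw|)^2 + 2|Dw|^2$ reduces the issue to $\int_A|Dw|^2$, which I control via the classical higher integrability for Almgren's $Q$-valued Dirichlet minimizers ($|Dw|^2 \in L^p_{\mathrm{loc}}$ for some $p>1$) and H\"older's inequality:
\[
\int_A|Dw|^2\,d\cH^m \leq C\bigl(\cH^m(A)/r^m\bigr)^{1-1/p}\int_{\baseball{\bp(p)}{2r}}|Dw|^2\,d\cH^m \leq C\varepsilon_{we}^{1-1/p}\be_T(\baseball{\bp(p)}{4r}).
\]
Shrinking $\varepsilon_{we}$ once more, the right-hand side becomes $\leq \eta\be_T(\baseball{\bp(p)}{4r})r^m$; the residual $r^m\bE^{1-2\beta}$ term is handled by a further calibration of $\beta$ close to $\tfrac{1}{2m}$ and the case-split mechanism above.

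The main obstacle is the interior regularity step. The weak Lipschitz bound alone only delivers $\operatorname{Lip}(f) \leq C\bE^\beta$ with $\beta < \tfrac{1}{2m}$, so the naive estimate $\int_A|Df|^2 \leq \operatorname{Lip}(f)^2\cH^m(A) \leq C\bE^{2\beta}\cH^m(A)$ scales as $\bE^{2\beta}$ in the excess and cannot be absorbed into an $\eta\bE$-type bound as $\bE \to 0$. The passage to the Dirichlet minimizer $w$ via Proposition \ref{P:harm-approx}, combined with the higher integrability (equivalently, the interior $L^\infty$-gradient estimate) for Almgren's $Q$-valued minimizers, is precisely what turns the $\bE^{2\beta}$-scaled bound into the sharp $\bE$-linear control required by \eqref{E:weak-excess-decay}. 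Tracking how the almost-minimality error $\omega(r)$ propagates through Lemma \ref{L:homotopy} and Proposition \ref{P:harm-approx} and performing the above case split produces the additive $\omega(r)^2 r^m$ correction.
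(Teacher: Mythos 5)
Your outline follows the same route as the paper: the same case split on whether \(\omega(r)\leq\varepsilon_{ha}\bE^{1/2}\) (the paper handles the opposite regime exactly as you do, by absorbing \(\bE\lesssim\omega^2\) into the \(C_{we}\omega(r)^2r^m\) term), then the harmonic approximation of Proposition \ref{P:harm-approx}, and the higher integrability of \(Q\)-valued Dir-minimizers plus H\"older to control \(\int_A|Dw|^2\) on the small set \(A\); this is precisely the argument of the reference the paper invokes for the second regime. However, there is one genuine gap: your treatment of the bad set. In your first display you bound \(\be_T(A)\) by \(\tfrac12(1+C\bE^{2\beta})\int_A|Df|^2\) plus the additive residual \(\|T-\mathbf{G}_f\|\bigl((\baseball{\bp(p)}{7r/2}\setminus K)\times\R^n\bigr)\leq Cr^m\bE^{1-2\beta}\), and you claim this residual is "handled by a further calibration of \(\beta\) close to \(\tfrac{1}{2m}\) and the case split". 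This cannot work: since \(\beta\in(0,\tfrac{1}{2m})\) forces \(1-2\beta<1\), one has \(\bE^{1-2\beta}\gg\bE\) as \(\bE\to0\) (for \(m=1\) the exponent even approaches \(0\)), whereas in the nontrivial regime \(\omega(r)^2\leq\varepsilon_{ha}^2\bE\), so the right-hand side of \eqref{E:weak-excess-decay} is of order \(\bE r^m\) with the \(\eta\)-smallness of the coefficient being the whole point of the lemma. No admissible choice of \(\beta\), nor the case split, absorbs a term of order \(\bE^{1-2\beta}r^m\).

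The missing ingredient is that the excess over the bad set must be estimated by using the almost-minimality once more, not by the crude mass bound of Proposition \ref{P:weak-lip}. In the argument the paper defers to, one picks a good radius \(s\) and compares \(T\res\cyl{p}{s}\) with the graph of a Lipschitz interpolation between \(f\) (near \(\partial\baseball{\bp(p)}{s}\)) and the Dir-minimizer \(w\) (inside), the comparison being licensed by the Homotopy Lemma \ref{L:homotopy}; this is also exactly where the \(\omega(r)^2r^m\) correction is generated (via Young's inequality applied to the \(\omega(r)r^m\bE^{1/2}\)-type error). Combined with the Taylor expansion, this yields \(\be_T(\baseball{\bp(p)}{s})\leq\tfrac12\int_{\baseball{\bp(p)}{s}}|Dw|^2+\bar\eta\,\be_T(\baseball{\bp(p)}{4r})+C\bE^{1+\gamma}r^m+C\omega(r)^2r^m\), and subtracting \(\be_T(\baseball{\bp(p)}{s}\cap K)=\be_{\mathbf{G}_f}(\baseball{\bp(p)}{s}\cap K)\) controls \(\be_T(\baseball{\bp(p)}{s}\setminus K)\) by differences of Dirichlet energies in which the bad set enters only multiplied by \(|Df|^2\lesssim\bE^{2\beta}\) or through \(\int_{\baseball{\bp(p)}{s}\setminus K}|Dw|^2\) (controlled, as in your H\"older step, via higher integrability and \(\cH^m(\baseball{\bp(p)}{s}\setminus K)\lesssim\bE^{1-2\beta}r^m\)), i.e. by quantities of order \(\bar\eta\bE r^m+\bE^{1+\gamma'}r^m\), which are absorbable. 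With that replacement for your residual term, the rest of your argument (the split \(|Df|^2\leq2(|Df|-|Dw|)^2+2|Dw|^2\) on \(A\cap K\), higher integrability plus H\"older, and shrinking \(\varepsilon_{we}\)) goes through and coincides with the intended proof.
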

\begin{proof}
We first assume that \(\omega({s_{\omega}})\geq\varepsilon_{ha}\bE(T,\cyl{p}{4r})^{\frac{1}{2}}\) which leads to
\begin{equation*} \be_T(A) = \bE(T,\cyl{p}{4r})\omega_m(4r)^m \leq C_0\bE(T,\cyl{p}{4r}) \leq \varepsilon_{ha}^{-2}\omega^2({s_{\omega}}),\end{equation*}
which gives \eqref{E:weak-excess-decay}. On the other hand, if we have \(\omega({s_{\omega}})\leq\varepsilon_{ha}\bE(T,\cyl{p}{4r})^{\frac{1}{2}}\) which is \eqref{harm_approx:(c)} of Proposition \ref{P:harm-approx}, then we can apply it and the proof goes as in \cite[Proposition 3.2]{DSS2}.
\end{proof}

We fix the notation for the \textbf{density of the excess measure} as follows
\begin{equation}\label{D:density-excess-measure}
\bd_T(q) := \limsup_{s\to 0} \frac{\be_T(\baseball{\bp(q)}{s})}{\omega_m s^m}. 
\end{equation} 

\subsection{Higher integrability of the excess measure's density}

One crucial point in the theory which will allow us to improve our weak excess decay, Lemma \ref{L:weak-excess-decay}, is the higher integrability of the function \(\bd_T\), it means that \(\bd_T\in\mathrm{L}^p(\baseball{\bp(p)}{2r})\) for some \(p>1\). By the Taylor expansion, we can easily compare \(|Df|\) (from the harmonic approximation, Proposition \ref{P:harm-approx}) and \(\bd_T\), hence we can reduce the problem of studying the higher integrability of minimizers of the Dirichlet energy to study it for \(\bd_T\). It will be used to prove the strong excess decay, Theorem \ref{T:strong-excess-decay}. For a more detailed discussion about this topic we refer the reader to \cite{Sp12}, \cite[Section 6]{DS3} and \cite{Sp10}.

\begin{prop}[Higher integrability of the density of the excess measure]\label{P:higher-int}
There exist positive geometric constants \(a=a(m,n,Q)>1, \varepsilon_a=\varepsilon_a(m,n,Q)>0\) and \(C_a=C_a(m,n,Q)>0\) such that, if
\begin{enumerate}[\upshape (a)]
    \item \(T\) is \(\omega\)-almost area minimizing under Assumption \ref{A:LA} and {$\int_0^{s_\omega}\frac{\omega(\rho)}{\rho}\mathrm{d}\rho < +\infty$,}
    
    \item \(\bE(T,\cyl{p}{4r})<\varepsilon_a<\varepsilon_{we}\),
\end{enumerate}
hold, then
\begin{equation}\label{E:thm:higher-int}
\int_{\{\bd_T \leq 1\}\cap\baseball{\bp(p)}{2r}}\bd_T^{a}\diff\cH^m \leq C_a  \left[\bE(T,\cyl{p}{4r})^a + \omega(r)^2\bE(T,\cyl{p}{4r})^{a-1}\right]r^m.
\end{equation} 
\end{prop}
\begin{proof}
The inequality is trivial if we consider that \(\bE(T,\cyl{p}{4r})=0\), we then assume w.l.o.g. that \(\bE := \bE(T,\cyl{p}{4r}) >0\). We claim that 
\begin{itemize}
    \item There exist constants \(\gamma = \gamma(m,\varepsilon_{we})\geq 2^m\) and \(\theta = \theta(m,n,\varepsilon_{we})>0\) such that, for every \(c\in \left[1, \frac{1}{\gamma\bE}\right]\) and \(s\in\left[2r,4r(1-c^{-\frac{1}{m}})\right]\), we have 
\begin{equation}\label{T:higher-int:claim}
\begin{aligned}
    \int_{\{c\gamma\bE\leq\bd_T\leq 1\}\cap\baseball{\bp(p)}{s}} \bd_T\diff\cH^m &\leq \gamma^{-\theta}r^m\int_{\{\frac{c\bE}{\gamma}\leq\bd_T\leq1\}\cap\baseball{\bp(p)}{s+rc^{-\frac{1}{m}}}}\bd_T\diff\cH^m\\
    &\quad + C_{we}c^{-1}\omega(r)^2r^{m},
\end{aligned}
\end{equation}
\end{itemize}

We now show how to obtain the statement of the theorem, i.e., equation \eqref{E:thm:higher-int}. We want to apply the claim for \(c=\gamma^{2k}\), to that end, we need
\begin{equation*} \gamma^{2k}\leq \frac{1}{\gamma \bE} \Rightarrow k \leq \frac{1}{2}\left( \log_{\gamma}\bE^{-1} - 1\right). \end{equation*}

Henceforth we denote by \(k_0\) the biggest number in \(\N\) that satisfies the latter inequality, and we define \(s_1:=2r\) and \(s_k=s_{k-1} + r\gamma^{-\frac{2k}{m}}, \forall k\leq k_0\). Recall that \(s_k\) is increasing, thus, we may apply the claim for \(c=\gamma^{2k}\) and \(s = s_k\), 
\begin{equation*}\begin{aligned} 
    \int_{\{\gamma^{2k+1}\bE\leq\bd_T\leq 1\}\cap\baseball{\bp(p)}{s_k}} \bd_T\diff\cH^m \overset{\eqref{T:higher-int:claim}}&{\leq} \gamma^{-\theta}r^m\int_{\{\gamma^{2k-1}\bE\leq\bd_T\leq1\}\cap\baseball{\bp(p)}{s_{k+1}}}\bd_T\diff\cH^m\\
    &\quad + C_{we}\gamma^{-2k}\omega(r)^2 r^{m}, \quad \forall k\in\{1,\dots, k_0\}.\\
\end{aligned}\end{equation*}

So, if we iterate the last equation, it is then immediate to see that
\begin{equation}\label{E:higher-int:interation}
\begin{aligned} 
    \int_{\{\gamma^{2k+1}\bE\leq\bd_T\leq 1\}\cap\baseball{\bp(p)}{2r}} \bd_T\diff\cH^m \overset{\eqref{T:higher-int:claim}}&{\leq} \gamma^{-(k-1)\theta}r^m\int_{\{\gamma^{2k_0-1}\bE\leq\bd_T\leq1\}\cap\baseball{\bp(p)}{s_{k_0}}}\bd_T\diff\cH^m\\
    &\quad + C_{we}\omega(r)^2 r^{m}\sum_{i=0}^{k-2}\gamma^{-2(k-i)+i\theta}, \quad \forall k\in\{1,\dots, k_0\}.\\
\end{aligned}
\end{equation}

We thus fix any number \(a=a(m,n,Q)\in(1,1+\sfrac{\theta}{2})\), if necessary, we reduce \(\theta\) in order to have \(\theta < \sfrac{2}{m}\) and define the sets \(A_0:=\{\bd_T < \gamma\bE\}\cap\baseball{\bp(p)}{2r}, A_k:= \{\gamma^{2k-1}\bE\leq\bd_T\leq\gamma^{2k+1}\bE\}\cap\baseball{\bp(p)}{2r}, \forall k\in\{1,\dots,k_0\},\) and \(A_{k_0+1}:=\{\gamma^{2k_0+1}\bE\leq\bd_T\leq 1\}\cap\baseball{\bp(p)}{2r}\). Therefore, we obtain that
\begin{equation*}
\begin{aligned}
\int_{\{\bd_T\leq 1\}\cap\baseball{\bp(p)}{2r}}&\bd_T^{a}\diff\cH^m \leq \sum_{k=0}^{k_0+1}\int_{A_k}\bd_T^{a}\diff\cH^m\leq \sum_{k=0}^{k_0+1}\gamma^{(2k+1)(a-1)}\bE^{a-1}\int_{A_k}\bd_T\diff\cH^m\\
\overset{\eqref{E:higher-int:interation}}&{\leq} C_1\sum_{k=0}^{k_0+1}\left[\gamma^{k(2(a-1)-\theta)}r^m\bE^{a}+C_2\bE^{a-1}\omega(r)^2r^{m}\sum_{i=0}^{k-2}\gamma^{k(2(a-1)-\frac{2}{m})-i(\frac{2}{m}-\theta)}\right]\\
&\leq C_3r^m\bE^{a} + C_3\bE^{a-1}\omega(r)^2r^{m}\sum_{k=0}^{k_0+1}\gamma^{k(2(a-1)-\theta)},
\end{aligned}
\end{equation*}
which concludes the proof of the theorem.

Let us prove the initial claim \eqref{T:higher-int:claim}. Let us fix the constant \(\eta = 2^{-2m-N}\) and the corresponding dimensional constant \(M=M(m,n)>0\) which is the constant given by the Besicovich's covering theorem, c.f. \cite[Section 1.5.2]{EG}, the natural number \(N\in\N\) such that \(M<2^{N-1}\) and
\begin{equation}\label{T:higher-int:def:gamma}
    \gamma =\gamma(m,\varepsilon_{we}) := \max\left\{ 2^m, \varepsilon_{we}^{-1} \right\}
\end{equation} 

So, as in the claim, take arbitrary numbers \(c\in \left[1, \frac{1}{\gamma\bE}\right]\) and \(s\in\left[2r,4r (1-c^{-\frac{1}{m}})\right]\).
We obtain the following inequality by classical rescaling of the excess
\begin{equation}\label{defining-r(p)}
\bE(T,\cyl{q}{t}) \leq \left(\frac{4r}{t}\right)^m\bE(T,\cyl{q}{4r}) \leq c\bE, \quad \forall t \geq 4r c^{-\frac{1}{m}}.
\end{equation}

Note that, by \eqref{defining-r(p)}, we can define
\begin{equation*} 4r(q):= \min\left \{ t\in \left[0, 4r c^{-\frac{1}{m}}\right]: \bE(T,\cyl{q}{t}) \leq c\bE \right \},\end{equation*}
it remains to show that \(r(q)>0\) and it follows for almost every \(q\in \{ c\gamma\bE\leq\bd_T\leq 1\}\cap \cyl{q}{s}\), since
\begin{equation*} \lim_{s\to 0}\bE(T,\cyl{q}{s}) \overset{\eqref{D:density-excess-measure}}{=} \bd_T(q) \geq c\gamma\bE \overset{\eqref{T:higher-int:def:gamma}}{\geq} 2^m c\bE. \end{equation*}

By the definition of \(r(q)\) and the fact that it is a positive constant, we obviously have the inequalities
\begin{align}
\bE(T,\cyl{q}{4r(q)}) \leq c\bE,\label{6.12-1}\\
\bE(T,\cyl{q}{t}) \geq c\bE, \quad \forall t \in \left( 0 , 4r(q) \right).\label{6.12-2}
\end{align}

Now, we aim to apply the weak excess decay, Lemma \ref{L:weak-excess-decay}, to the current \(T\res\cyl{q}{4r(q)}\) and the set \(A=\{ c\gamma\bE\leq \bd_T \}\cap\baseball{\bp(q)}{4r(q)}\). It is clear that we are under hypothesis \eqref{weak-decay:(a)} of Lemma \ref{L:weak-excess-decay}, let us check \eqref{weak-decay:(b)}:
\begin{equation*} \bE(T,\cyl{q}{4r(q)}) \overset{\eqref{6.12-1}}{\leq} c\bE \leq \frac{E}{\gamma E} = \gamma^{-1} \overset{\eqref{T:higher-int:def:gamma}}{<}  \varepsilon_{we}, \end{equation*}
and \eqref{weak-decay:(c)} follows from:
\begin{equation*}\begin{aligned}
\cH^m(A) \leq \frac{1}{c\gamma\bE}\int_{A}\bd_T\diff\cH^m(q) \overset{\textup{Fatou's Lemma}}&{\leq} \frac{1}{c\gamma\bE}\lim_{s\to 0}\int_{A}\bE(T,\cyl{q}{s})\diff\cH^m(q)\\
\overset{\eqref{6.12-1}}&{\leq} \frac{\cH^m(\baseball{\bp(q)}{4r(q)})}{\gamma}\\
\overset{\eqref{T:higher-int:def:gamma}}&{<} \varepsilon_{we}\omega_m(4r(q))^m.
\end{aligned}\end{equation*}

Now, recalling that \(\eta = 2^{-2m-N}\), we are able to apply the weak excess decay, Lemma \ref{L:weak-excess-decay}, and Lebesgue's differentiation theorem to derive
\begin{equation}\label{E:higher-int:dTinA}
\begin{aligned}
\int_{A}\bd_T\diff\cH^m \leq \be_T(A) \overset{\eqref{E:weak-excess-decay}}&{\leq}  2^{-2m-N}\be_T(\baseball{\bp(q)}{4r(q)})r(q)^m + C_{we} \omega(r(q))^{2} r(q)^m \\
&= 2^{-N}\bE(T,\cyl{q}{4r(q)}) + C_{we} \omega(r(q))^{2} r(q)^m \\
\overset{\eqref{6.12-1}}&{\leq} 2^{-N}c\bE + C_{we} \omega(r(q))^{2} r(q)^m \\
\overset{\eqref{6.12-2}}&{\leq} 2^{-N}\bE(T,\cyl{q}{r(q)}) + C_{we} \omega(r(q))^{2} r(q)^m \\
&= 2^{-N}\be_T(\baseball{\bp(q)}{r(q)})r(q)^m + C_{we} \omega(r(q))^{2} r(q)^m.
\end{aligned}
\end{equation}

From the latter chain of inequalities \eqref{E:higher-int:dTinA} and the fact that \(\{\bd_T > 1\}\cap\baseball{\bp(q)}{r(q)} \subset A\), we obtain
\begin{equation}\label{E:d>1}
\int_{\{\bd_T > 1\}\cap\baseball{\bp(q)}{r(q)} }\bd_T\diff\cH^m \leq \int_{A}\bd_T\diff\cH^m 
\leq 2^{-N}\be_T(\baseball{\bp(q)}{r(q)})r(q)^m + C_{we} \omega(r(q))^{2} r(q)^m.   
\end{equation}

It is easy to see that
\begin{equation}\label{E:d<ce-over-gamma}
\int_{\{ \bd_T < \frac{c\bE}{\gamma}\}\cap\baseball{\bp(q)}{r(q)}}\bd_T\diff\cH^m \leq \frac{c\bE}{\gamma}\omega_m r(q)^m \overset{\eqref{6.12-2}}{\leq} \gamma^{-1}\be_T(\baseball{\bp(q)}{r(q)}),
\end{equation}

Putting the Lebesgue differentiation theorem, \eqref{E:d>1} and \eqref{E:d<ce-over-gamma} into account, we infer that
\begin{equation*}
\begin{aligned}
    \be_T(\ball{\bp(q)}{r(q)}) &= \int_{\{ \bd_T < \frac{c\bE}{\gamma}\}\cap\baseball{\bp(q)}{r(q)}}\bd_T\diff\cH^m + \int_{\{\frac{c\bE}{\gamma} \leq \bd_T \leq 1\}\cap\baseball{\bp(q)}{r(q)} }\bd_T\diff\cH^m \\
    &\quad + \int_{\{\bd_T > 1\}\cap\baseball{\bp(q)}{r(q)} }\bd_T\diff\cH^m \\
    \overset{\eqref{E:d>1},\eqref{E:d<ce-over-gamma}}&{\leq} \left(2^{-N}r(q)^m+\gamma^{-1}\right)\be_T(\baseball{\bp(q)}{r(q)}) + C_{we} \omega(r(q))^{2} r(q)^m \\
    &\quad + \int_{\{\frac{c\bE}{\gamma} \leq \bd_T \leq 1\}\cap\baseball{\bp(q)}{r(q)} }\bd_T\diff\cH^m,
\end{aligned}
\end{equation*}
which surely implies
\begin{equation}\label{E:higher-int:excess-measure}
\left[ 1- 2^{-N}-\gamma^{-1} \right]\be_T(\ball{\bp(q)}{r(q)}) \leq \int_{\{\frac{c\bE}{\gamma} \leq \bd_T \leq 1\}\cap\baseball{\bp(q)}{r(q)} }\bd_T\diff\cH^m + C_{we} \omega(r(q))^{2} r(q)^m.
\end{equation}

Since \(\{c\gamma\bE\leq\bd_T\leq1\}\cap\baseball{\bp(q)}{r(q)} \subset A\), it is guaranteed that
\begin{equation}\label{E:higher-int:prefinal}
\begin{aligned}
    \int_{\{c\gamma\bE\leq\bd_T\leq1\}\cap\baseball{\bp(q)}{r(q)}}\bd_T\diff\cH^m &\leq \int_A\bd_T\diff\cH^m \\
    \overset{\eqref{E:higher-int:dTinA}}&{\leq} 2^{-N}\be_T(\baseball{\bp(q)}{r(q)})r(q)^m + C_{we} \omega(r(q))^{2} r(q)^m \\
    \overset{\eqref{E:higher-int:excess-measure}}&{\leq} \frac{2^{-N}}{1- 2^{-N}-\gamma^{-1}}r(q)^m\int_{\{\frac{c\bE}{\gamma}\leq\bd_T\leq1\}\cap\baseball{\bp(q)}{r(q)}}\bd_T\diff\cH^m  \\
    &\quad + C_{we} \omega(r(q))^{2} r(q)^m,
\end{aligned}
\end{equation}
using that \(\gamma \geq 2^m\), we can bound \(\frac{2^{-N}}{1- 2^{-N}-\gamma^{-1}}\) by \(2^{-N+1}\).
Recalling that \(M\) is given by the Besicovich's covering theorem, \cite[Section 1.5.2]{EG}, we choose \(M\) families \(\{\mathcal{A}_1,\dots,\mathcal{A}_M\}\) of closed disjoint balls \(\overline{\baseball{\bp(q)}{r(q)}}\) with center in \(\baseball{\bp(p)}{s}\) such that their union covers \(\{c\gamma\bE\leq\bd_T\leq 1\}\cap\baseball{\bp(p)}{s}\), then by \eqref{E:higher-int:prefinal} we have that
\begin{equation*}
\begin{aligned}
    \int_{\{c\gamma\bE\leq\bd_T\leq1\}\cap\baseball{\bp(p)}{s}}\bd_T\diff\cH^m &\leq \sum_{i=1}^M\sum_{q\in\mathcal{A}_i} \int_{\{c\gamma\bE\leq\bd_T\leq1\}\cap\baseball{\bp(q)}{r(q)}}\bd_T\diff\cH^m \\
    \overset{\eqref{E:higher-int:prefinal}}&{\leq} 2^{-N+1}\sum_{i=1}^M\sum_{q\in\mathcal{A}_i} \left[\int_{\{\frac{c\bE}{\gamma}\leq\bd_T\leq1\}\cap\baseball{\bp(q)}{r(q)}}\bd_T\diff\cH^m \right.\\
    &\quad \biggl. + C_{we} \omega(r(q))^{2} r(q)^m\biggr]r(q)^m\\
    \overset{(\ast)}&{\leq} 2^{-N+1}Mc^{-1}r^m\int_{\{\frac{c\bE}{\gamma}\leq\bd_T\leq1\}\cap\baseball{\bp(p)}{s+rc^{-\frac{1}{m}}}}\bd_T\diff\cH^m \\
    &\quad + C_{we}c^{-1}r^{m}\sum_{i=1}^M\sum_{q\in\mathcal{A}_i}\omega(r(q))^2\\
    &\leq 2^{-N+1}Mc^{-1}r^m\int_{\{\frac{c\bE}{\gamma}\leq\bd_T\leq1\}\cap\baseball{\bp(p)}{s+rc^{-\frac{1}{m}}}}\bd_T\diff\cH^m \\
    &\quad + C_{we}c^{-1}\omega(r)^2 r^{m},
\end{aligned}
\end{equation*}
where in \((\ast)\) we use that \(r(q)\leq rc^{-\frac{1}{m}}, c^{-1}\leq 1\) and \(M<2^{N-1}\). Now, we choose
\begin{equation*}
    \theta = \theta(m,n,\varepsilon_{we}) :=  -\log_{\gamma}\left(\frac{M}{2^{N-1}}\right),
\end{equation*} 
with this number settled and recalling that \(c^{-1}\leq 1\), we finish the proof of the claim.
\end{proof}

\subsection{Strong excess decay}

We now enunciate the strong excess decay which is a straightened version of the weak statement, Lemma \ref{L:weak-excess-decay}, in which we cut off assumption \eqref{weak-decay:(c)} and improve \eqref{E:weak-excess-decay} to \eqref{T:strong-decay}. This stronger decay will allow us to improve all our previous approximation to then give the proof of the main theorem, Theorem \ref{T:strong-approx}. 

\begin{thm}[Strong excess decay]\label{T:strong-excess-decay}
There exist positive geometric constants \(\varepsilon_{se}=\varepsilon_{se}(m,n,Q)>0,\) \(C_{se}=C_{se}(m,n,Q)>0,\) and \(\gamma_{se}=\gamma_{se}(m,n,Q)>0\) such that, if
\begin{enumerate}[\upshape (a)]
    \item \(T\) is \(\omega\)-almost area minimizing under Assumption \ref{A:LA} and {$\int_0^{s_\omega}\frac{\omega(\rho)}{\rho}\mathrm{d}\rho < +\infty$,}
    
    \item \(\bE(T,\cyl{p}{4r})<\varepsilon_{se}<\varepsilon_a\),
\end{enumerate}
hold, thus, for every Borel set \(A\subset \baseball{\bp(p)}{\frac{5r}{4}}\), we have that
\begin{equation}\label{T:strong-decay}
\be_T(A) \leq C_{se}\left(\bE(T,\cyl{p}{4r})^{\gamma_{se}} + \cH^m(A)^{\gamma_{se}}\right)\left(\bE(T,\cyl{p}{4r}) + \omega(r)^{2}\right)r^m.
\end{equation}
\end{thm}
\begin{proof}
The proof of this result is similar to the one given in \cite[Theorem 7.1]{DS3} where the authors consider area minimizing integral currents with support contained in an ambient manifold. We set \(\bE:=\bE(T,\cyl{p}{4r})\). Given \(\beta\in(0,\frac{1}{2m})\), as it is done in \cite[Proposition 7.3]{DS3}, using a regularization by convolution technique, we can build up a set \(B\subset [r,2r]\) with \(\cH^1(B)>\sfrac{r}{2}\) such that, for every \(z\in B\), there exists a Lipschitz function \(g:\baseball{\bp(p)}{z}\to\Is{Q}\) satisfying
\begin{align}
    \operatorname{Lip}(g) &\leq C_1\bE^{\beta}, \nonumber \\
    g|_{\partial\baseball{\bp(p)}{z}} &= f|_{\partial\baseball{\bp(p)}{z}}, \nonumber \\
    \int_{\baseball{\bp(p)}{z}}|Dg|^2\diff\cH^m &\leq \int_{\baseball{\bp(p)}{z}\cap K}|Df|^2\diff\cH^m  + C_1\bE^{\gamma_0}\left(\bE+\omega(r)^2\right)r^m, \label{E:strong-decay:Dir-g-Dir-f}
\end{align}
where \(f\) and \(K\) are given by the weak approximation from Proposition \ref{P:weak-lip}, and \(\gamma_0 = \gamma_0(m,n,Q)>0\) and \(C_1 = C_1(m,n,Q)>0\) are positive geometric constants. There is a radius \(z\in B\) and a current \(P\in\mathbf{I}_m(\R^{m+n})\) with
\begin{align}
\partial P &= \partial \left(T\res\cyl{p}{z}-\mathbf{G}_f\res\cyl{p}{z}\right) \label{E:strong-decay:bdry-P-equal} \\
\| P\|(\cyl{p}{z}) &\leq C_2 \bE^{1+\gamma_1}r^m. \label{E:strong-decay:mass-P}
\end{align}

We now take \(\gamma\) to be \(\min\{\gamma_0, \gamma_1\}\). Thanks to \eqref{E:strong-decay:bdry-P-equal} and Lemma \ref{L:homotopy}, we are apt to apply \cite[Equation 2.2]{DSS2} in our context which, together with \eqref{E:strong-decay:mass-P}, provides us the following
\begin{equation}\label{E:from-2-homot}
\|T\|(\cyl{p}{z}) \leq \|\mathbf{G}_g\|(\cyl{p}{z})+C_3\bE^{1+\gamma}r^m + C_3\omega(r)^2r^{m}\bE^{\frac{3}{4}} + C_3\int_{\baseball{\bp(p)}{z}}\cG(g,f)\diff\cH^m.
\end{equation}

We proceed with a simple algebraic argument, notice that for any nonzero real numbers \(a\) and \(b\), we have that \(0\leq (ba)^2 -2ab + 1\) which leads to \(2a \leq ba^2 + \sfrac{1}{b}\). Therefore, taking \(a=\omega(r)^2r^{m}\bE^{\sfrac{3}{4}}\) and \(b = \omega(r)^{-2}r^{-m}\bE^{\gamma-\sfrac{3}{2}}\), we have that
\begin{equation*} 2\omega(r)^2r^{m}\bE^{\frac{3}{4}} \leq \omega(r)^2r^{m}\bE^{\gamma} + \omega(r)^2r^{m}\bE^{\frac{3}{2}-\gamma} \leq \omega(r)^2r^{m}\bE^{\gamma} + \omega(r)^2r^{m}\bE^{1+\gamma},\end{equation*}
where in the last inequality we used that \(\gamma < \sfrac{1}{4}\). By the latter inequality and \eqref{E:from-2-homot}, we get that 
\begin{equation}\label{E:strong-decay:massT}
\|T\|(\cyl{p}{z}) \leq \|\mathbf{G}_g\|(\cyl{p}{z}) + C_3\bE^\gamma\left(\omega(r)^2 + \bE\right)r^m + C_3\int_{\baseball{\bp(p)}{z}}\cG(g,f)\diff\cH^m.
\end{equation}

Now, we apply the Taylor expansion, \cite[Corollary 3.3]{DS2}, for \(\bG_g\) together with \eqref{E:strong-decay:Dir-g-Dir-f} to get that
\begin{equation}\label{E:strong-decay:massGg}
\|\bG_g\|(\cyl{p}{z}) \leq Q\cH^m(\baseball{\bp(p)}{z}) + \int_{\baseball{\bp(p)}{z}\cap K}\frac{|Df|^2}{2} + C_1\bE^{\gamma}\left(\bE+\omega(r)^2\right)r^m,
\end{equation}
and for \(\bG_f\) to obtain that
\begin{equation}\label{E:strong-decay:massGf}
\|\bG_f\|(\cyl{p}{z}\cap K) \geq Q\cH^m(\baseball{\bp(p)}{z}\cap K) + \int_{\baseball{\bp(p)}{z}\cap K}\frac{|Df|^2}{2} - C_4\bE^{\gamma}\left(\bE+\omega(r)^2 r^{m}\right).
\end{equation}

So, in order to estimate the excess measure of the bad set, we put the past inequalities into account as follows:
\begin{align}
    \be_T(\baseball{\bp(p)}{z}\setminus K) &= \| T\|(\cyl{p}{z}\setminus K) - Q\cH^m(\baseball{\bp(p)}{z}\setminus K) \nonumber  \\
    &= \|T \|(\cyl{p}{z}) - \|T \|(\cyl{p}{z}\cap K) - Q\cH^m(\baseball{\bp(p)}{z}\setminus K) \nonumber \\
    \overset{\eqref{E:strong-decay:massT},\eqref{E:strong-decay:massGg}}&{\leq} Q\cH^m(\baseball{\bp(p)}{z}) - Q\cH^m(\baseball{\bp(p)}{z}\setminus K)  \nonumber\\
    &\quad +\int_{\baseball{\bp(p)}{z}\cap K}\frac{|Df|^2}{2} + C_1\bE^{\gamma}\left(\bE+\omega(r)^{2}\right)r^m \nonumber \\
    &\quad + C_3\bA\int_{\baseball{\bp(p)}{z}}\cG(g,f)\diff\cH^m - \|T \|(\cyl{p}{z}\cap K) \nonumber \\
    \overset{\eqref{E:strong-decay:massGf}}&{\leq} C_5\bE^{\gamma}\left(\bE+\omega(r)^2\right)r^m + C_3\bA\int_{\baseball{\bp(p)}{z}}\cG(g,f)\diff\cH^m \nonumber \\
    &\quad + \|\bG_f\|(\cyl{p}{z}\cap K) - \|T \|(\cyl{p}{z}\cap K) \nonumber \\
    &= C_5\bE^{\gamma}\left(\bE+\omega(r)^2\right)r^m + C_3\bA\int_{\baseball{\bp(p)}{z}}\cG(g,f)\diff\cH^m \nonumber ,
\end{align}
where in the last inequality we use that in the good set \(K\) the current \(\bG_f\) induced by the weak approximation \(f\) of the current \(T\) coincides with the current \(T\), i.e. \eqref{E:weak-app:=inK}. Now we notice that
\begin{equation*} \int_{\baseball{\bp(p)}{z}}\cG(g,f)\diff\cH^m \leq C_6 \bE^{\gamma}\left(\bE + \omega(r)^2\right)r^m, \end{equation*}
the proof of this fact is given in the proof of \cite[Theorem 4.1]{DS3}, the argument given by the authors works line by line in our setting. The latter inequality together with the long chain of inequalities above furnish
\begin{equation}\label{E:strong-decay:e_TinBadSet}
\be_T(\baseball{\bp(p)}{z}\setminus K) \leq C_7 \bE^{\gamma}\left(\bE + \omega(r)^2\right)r^m.  
\end{equation}

Let us now handle the term \(\be_T(A)\). To that end, we recall that \(|Df|^2\) is a \(L^1\) function, so, almost every point of \(K\) is a Lebesgue point of \(|Df|^2\) which together with the Taylor expansion ensure that
\begin{equation}\label{E:strong-decay:Df}
\begin{aligned}
|Df|^2(x)&=\lim_{t\to 0}\frac{1}{\omega_mt^m}\int_{\baseball{x}{t}\cap K}|Df|^2\diff\cH^m \leq C_8  \lim_{t\to 0}\frac{\be_{\mathbf{G}_f}(\baseball{x}{t}\cap K)}{\omega_mt^m} \\
\overset{\eqref{E:weak-app:=inK}}&{=} C_8  \lim_{t\to 0}\frac{\be_{T}(\baseball{x}{t}\cap K)}{\omega_mt^m} \leq C_8 \limsup_{t\to 0}\frac{\be_{T}(\baseball{x}{t})}{\omega_mt^m} = C_8 \bd_T(x).
\end{aligned}
\end{equation}

We now are in position to apply Proposition \ref{P:higher-int} and we also recall that\(\bd_T \leq C_9 \bE^{1+\gamma} < 1\) in \(K\). Therefore, given a Borel set \(A\subset\baseball{\bp(p)}{z}\), we proceed as follows:
\begin{equation}\label{E:strong-decay:eT(A)}
\begin{aligned}
    \be_T(A\cap K) \overset{\textup{Taylor}}&{\leq} \int_{A\cap K}\frac{|Df|^2}{2}\diff\cH^m + C_{10}\bE^{1+\gamma} \\
    \overset{\textup{H\"older ineq.}}&{\leq} \cH^m(A\cap K)^{\frac{a-1}{a}}\left(\int_{A\cap K}\frac{|Df|^{2a}}{2}\diff\cH^m \right)^{\frac{1}{a}} + C_{10}\bE^{1+\gamma} \\
    \overset{\eqref{E:strong-decay:Df}}&{\leq} \frac{C_8}{2^{\frac{1}{a}}} \cH^m(A)^{\frac{a-1}{a}}\left(\int_{A\cap K}\bd_T^{a}\diff\cH^m \right)^{\frac{1}{a}} + C_{10}\bE^{1+\gamma} \\
    &\leq C_{8} \cH^m(A)^{\frac{a-1}{a}}\left(\int_{\{\bd_T\leq 1\}\cap\baseball{\bp(p)}{z}}\bd_T^{a}\diff\cH^m \right)^{\frac{1}{a}} + C_{10}\bE^{1+\gamma} \\
    \overset{\eqref{E:thm:higher-int}}&{\leq} C_8C_a\cH^m(A)^{\frac{a-1}{a}}  \left[\bE^a r^m + \omega(r)^2 r^{m}\bE^{a-1}\right]^{\frac{1}{a}} + C_{10}\bE^{1+\gamma} \\
    &\leq C_8C_a\cH^m(A)^{\frac{a-1}{a}} \left[\bE + \omega(r)^2\right]r^m + C_{10}\bE^{1+\gamma},
\end{aligned}
\end{equation}
where in the last inequality we used (c) to obtain that \(\bE\left[1 + \omega(r)^2\bE^{-1}\right]^{1/a}\) is close to \(\bE\) and consequently smaller than \(\bE + \omega(r)^2\).
Finally, recalling that \(z>r\) and \(A\subset\baseball{\bp(p)}{z}\), possibly choosing \(\gamma\) smaller depending on \(a\), we put \eqref{E:strong-decay:eT(A)}, the triangle inequality and \eqref{E:strong-decay:e_TinBadSet} together to easily conclude the proof of the theorem. Lastly, we remark that \(z\in B\) and \(\cH^1(B)>\frac{r}{2}\), then we surely can take \(z\in\left[\frac{5}{4}r,2r\right].\)
\end{proof}

\subsection{Superlinear Lipschitz approximation}

We now state our main strong approximation theorem which is the analogous of \cite[Theorem 1.4]{DSS2} to our general almost minimality condition, and we also provide some improvements.

\begin{thm}[Strong Lusin type Lipschitz approximation]\label{T:strong-approx}
There exist positive geometric constants \(\varepsilon_{la}=\varepsilon_{la}(m,n,Q)>0,\) \(C=C(m,n,Q)>0,\) and \(\gamma_{la}=\gamma_{la}(m,n,Q)\in (0,1/(2m))\) such that, if
\begin{enumerate}[\upshape (a)]
    \item
    \(T\) is \(\omega\)-almost area minimizing under Assumption \ref{A:LA} and {$\int_0^{s_\omega}\frac{\omega(\rho)}{\rho}\mathrm{d}\rho < +\infty$,}
    
    \item\label{strong-approx:(b)} \(\bE(T,\cyl{p}{4r})<\varepsilon_{la}\),
\end{enumerate}
hold, then there exist a Lipschitz map \(f:\baseball{\bp(p)}{r}\to\Is{Q}\) and \(K\subset\baseball{\bp(p)}{r}\) such that
\begin{align}
\operatorname{Lip}(f) &\leq C\bE(T,\cyl{p}{4r})^{\gamma} \label{T:strong-approx:lip},\\
\mathbf{G}_f\res\left(K\times\R^n\right) &= T\res\left(K\times\R^n\right) \label{T:strong-approx:T=Gf},\\
\cH^m\left(\baseball{\bp(p)}{r}\setminus K\right) &\leq  C \bE(T,\cyl{p}{4r})^{\gamma}\left[\bE(T,\cyl{p}{4r}) + \omega^2(r)\right]r^m \label{T:strong-approx:bad-set},\\
\be_T(A\setminus K) &\leq  C \bE(T,\cyl{p}{4r})^{\gamma}\left[\bE(T,\cyl{p}{4r}) + \omega^2(r)\right]r^m \label{T:strong-approx:excess-measure},\\
\int_{A\setminus K}\frac{|Df|^2}{2}\diff\cH^m &\leq  C \bE(T,\cyl{p}{4r})^{3\gamma}\left[\bE(T,\cyl{p}{4r}) + \omega^2(r)\right]r^m \label{T:strong-approx:Df-bad-set},\\
\left|\be_T(A) -  \int_{A}\frac{|Df|^2}{2}\diff\cH^m \right| &\leq  C \bE(T,\cyl{p}{4r})^{\gamma}\left[\bE(T,\cyl{p}{4r}) + \omega^2(r)\right]r^m \label{T:strong-approx:Df-eT},
\end{align}
where the last inequality holds for every Borel set \(A\subset\baseball{\bp(p)}{r}\).
\end{thm}

We will usually refer to the set \(K\) as the \textbf{good set} and \(\baseball{\bp(p)}{r}\setminus K\) as the \textbf{bad set}.

\begin{proof}[Proof of Theorem \ref{T:strong-approx}]
Fix \(\varepsilon_{la}=:\varepsilon\) and \(\gamma_{la}=:\gamma\). We begin noticing that \(\varepsilon\) is supposed to be smaller than \(\varepsilon_{se}\), in particular, it is smaller than \(\varepsilon_{ha}\) and thus condition \eqref{strong-approx:(b)} is satisfied with \(\varepsilon_{ha}\). We start taking \(\gamma< \sfrac{1}{2m}\) and thus we take \(K\) and \(f\) to be the weak approximation given by Proposition \ref{P:weak-lip}. So, it is clear that \eqref{T:strong-approx:lip} and \eqref{T:strong-approx:T=Gf} follows respectively from \eqref{E:weak-app:lip-bound} and \eqref{E:weak-app:=inK}. In order to prove \eqref{T:strong-approx:bad-set}, we define 
\begin{equation*}A:= \{\mathbf{m}\be_T>2^{-m}\bE(T,\cyl{p}{4r})^{2\gamma}\}\cap\baseball{\bp(p)}{\sfrac{9r}{8}},\end{equation*}
notice that \cite[Proposition 3.2]{DS3} gives that \(\cH^m(A)\leq C_1\bE(T,\cyl{p}{4r})^{1-2\gamma}\). Therefore, possibly taking \(\gamma <\gamma_{se}\), we use the strong excess decay to obtain
\begin{align*}
\cH^m(\baseball{\bp(p)}{r}\setminus K) &\leq C_2\bE(T,\cyl{p}{4r})^{-2\gamma}\be_T(A)\\
\overset{\eqref{T:strong-decay}}&{\leq} C_2\bE(T,\cyl{p}{4r})^{2\gamma_{se}-2\gamma(1+\gamma_{se})}\left(\bE(T,\cyl{p}{4r}) + \omega(r)^{2}\right)r^m,
\end{align*} 
which, possibly choosing \(\gamma < \frac{\gamma_{se}}{2(1+\gamma_{se})}\), furnishes \eqref{T:strong-approx:bad-set}. We fix \(\bE(T,\cyl{p}{4r})=\bE\). Now, we firstly prove \eqref{T:strong-approx:Df-eT} for \(A=\baseball{\bp(p)}{r}\) as follows:
\begin{align*}
    \biggl| \be_T(\baseball{\bp(p)}{r}) -  \int_{\baseball{\bp(p)}{r}}\frac{|Df|^2}{2}\diff\cH^m \biggr| &= \biggl|\be_T(\baseball{\bp(p)}{r}) + \be_{\mathbf{G}_f}(\baseball{\bp(p)}{r}) \biggr.\\
    &\quad -\biggl. \be_{\mathbf{G}_f}(\baseball{\bp(p)}{r}) -  \int_{\baseball{\bp(p)}{r}}\frac{|Df|^2}{2}\diff\cH^m \biggr|
\end{align*}

Using that \(\mathbf{G}_f\) is equal to \(T\) on the good set, we obtain that
\begin{align*}
    \left|\be_T(\baseball{\bp(p)}{r}) -  \int_{\baseball{\bp(p)}{r}}\frac{|Df|^2}{2}\diff\cH^m \right| &\leq \be_T(\baseball{\bp(p)}{r}\setminus K) + \be_{\mathbf{G}_f}(\baseball{\bp(p)}{r}\setminus K) \\
    &\quad +\left|\be_{\mathbf{G}_f}(\baseball{\bp(p)}{r}) -  \int_{\baseball{\bp(p)}{r}}\frac{|Df|^2}{2}\diff\cH^m \right| \\
    \overset{\eqref{T:strong-decay},\eqref{T:strong-approx:bad-set}}&{\leq} C_2 \bE^{\gamma}\left[\bE + \omega(r)^2\right]r^m \\
    &\quad +\left|\be_{\mathbf{G}_f}(\baseball{\bp(p)}{r}) -  \int_{\baseball{\bp(p)}{r}}\frac{|Df|^2}{2}\diff\cH^m \right|\\
    \overset{\textup{Taylor}}&{\leq} C_3 \bE^{\gamma}\left[\bE + \omega(r)^2\right]r^m.
\end{align*}

For every Borel set \(A\subset\baseball{\bp(p)}{r}\), notice that \eqref{T:strong-approx:lip} and \eqref{T:strong-approx:bad-set} give
\begin{equation*}\int_{F\setminus K} \abs{Df}^2 \overset{\eqref{T:strong-approx:lip}}{\le} C_4 \bE^{2\gamma} \cH^m(\baseball{\bp(p)}{r}\setminus K) \overset{\eqref{T:strong-approx:bad-set}}{\le} C_5 \bE^{3\gamma}\left[\bE + \omega(r)^2\right]r^m , \end{equation*}
hence we achieve \eqref{T:strong-approx:Df-bad-set}. The Taylor expansion of the area functional, \cite[Corollary 3.3]{DS2} gives
\begin{equation*}\left|\be_{\bG_f}(A)-\frac12\int_A \abs{Df}^2\right| \le C_5 \bE^{\gamma}\left[\bE + \omega(r)^2\right]r^m.
\end{equation*}

Therefore, we obtain for every Borel set \(A\subset\baseball{\bp(p)}{r}\) that
\begin{align*}
    \be_T(A\setminus K) &= \be_T(A) - \be_{\bG_f}(A\cap K) \\
    \overset{\textup{triangle ineq.}}&{\le} \left|\be_T(A)-{\frac12} \int_{A} \abs{Df}^2 \right| + \left|{\frac12} \int_{A\cap K} |Df|^2- \be_{\bG_f}(A\cap K)\right|+ \int_{A\setminus K} \abs{Df}^2\\
    \overset{\eqref{T:strong-approx:Df-eT},\eqref{T:strong-approx:Df-bad-set}}&{\le}  C_6 \bE^{\gamma}\left[\bE + \omega(r)^2\right]r^m + \left|{\frac12} \int_{A\cap K} |Df|^2- \be_{\bG_f}(A\cap K)\right|\\
    \overset{\textup{Taylor}}&{\leq} C_7 \bE^{\gamma}\left[\bE + \omega(r)^2\right]r^m,
\end{align*}
which is \eqref{T:strong-approx:excess-measure}. With the aim at proving \eqref{T:strong-approx:Df-eT}, which we have proved above for the special case \(A=\baseball{\bp(p)}{r}\), we proceed as follows:
\begin{equation*}
\begin{aligned}
    \left| \be_T(A) - \frac12 \int_A \abs{Df}^2\right| &\le \left|\be_T(A\cap K) - \frac12 \int_{A\cap K} \abs{Df}^2\right|+ \be_T(A\setminus K) + \frac12 \int_{A\setminus K} \abs{Df}^2\\
    \overset{\eqref{T:strong-approx:Df-eT},\eqref{T:strong-approx:excess-measure},\eqref{T:strong-approx:Df-bad-set}}&{\le} C_8 \bE^{\gamma}\left[\bE + \omega(r)^2\right]r^m .
\end{aligned}
\end{equation*}
\end{proof}

\renewcommand{\abstractname}{Acknowledgements}
\begin{abstract}
The author was supported by CAPES-Brazil with a Ph.D. scholarship 88882.377954/2019-01. This work was carried out in Princeton University (this visit was financed by FAPESP-Brazil grant 2021/05256-0) and The Fields Institute for Research in Mathematical Sciences during the Thematic Program on Nonsmooth Riemannian and Lorentzian Geometry. My sincere thanks to Stefano Nardulli for reading a first version of this note and giving relevant insights. {I also thanks the anonymous referee for several important comments and suggestions}. 
\end{abstract}

            \bibliographystyle{siam}
            \addcontentsline{toc}{section}{References} 
            \bibliography{biblio}

\noindent\textit{Reinaldo Resende,\\
rresende@andrew.cmu.edu,\\
Carnegie Mellon University,\\
Wean Hall 7124,\\
Pittsburgh, PA, USA.}
\end{document}